\newtheorem{remark}{Remark}
\newtheorem{Theorem}{Theorem}[section]
\newtheorem{Lemma}[Theorem]{Lemma}
\newtheorem{Proposition}[Theorem]{Proposition}
\newtheorem{Definition}[Theorem]{Definition}
\newcommand{\dx}{\,\mathrm{d} x}
\newcommand{\ds}{\,\mathrm{d}s}
\DeclareMathAlphabet{\mathsfsl}{OT1}{cmss}{m}{sl}
\renewcommand{\vec}[1]{\mbox{\boldmath$#1$}}
\newcommand{\md}{\mathrm{D}}
\newcommand{\vu}{\vec{u}}
\pgfplotsset{compat=1.17} 
\begin{document}
\title{
Shape Optimization of Supercapacitor Electrode to Maximize Charge Storage\thanks{This work was supported in part by the National Key Basic Research Program under grant 2022YFA1004402, National Key R\&D Program of China (pp. 2023YFF1204201), the National Natural Science Foundation of China under grant (pp. 12401534, pp. 12471377, and pp. 12171319), the China Postdoctoral Science Foundation (pp. 2024M751947), the Postdoctoral Fellowship Program (Grade B) of China Postdoctoral Science Foundation (pp. GZB20240436), and the Science and Technology Commission of
Shanghai Municipality (pp. 22ZR1421900 and 22DZ2229014).}}
\author{Jiajie Li \thanks{School of Mathematical Sciences, MOE-LSC, CMA-Shanghai, and Shanghai Center for Applied Mathematics,  Shanghai Jiao Tong University, Shanghai, China. E-mail: lijiajie199477@sjtu.edu.cn}
\and Shenggao Zhou \thanks{School of Mathematical Sciences, MOE-LSC, CMA-Shanghai, and Shanghai Center for Applied Mathematics,  Shanghai Jiao Tong University, Shanghai, China. Email: sgzhou@sjtu.edu.cn}
\and Shengfeng Zhu \thanks{Key Laboratory of MEA (Ministry of Education) \& Shanghai Key Laboratory of Pure Mathematics and Mathematical Practice \& School of Mathematical Sciences, East China Normal University, Shanghai 200241, China. E-mail: sfzhu@math.ecnu.edu.cn}}
\maketitle

\begin{abstract}
This work proposes a shape optimization approach for electrode morphology to maximize charge storage in supercapacitors. The ionic distributions and electric potential are {modeled} by the steady-state Poisson--Nernst--Planck system. Shape sensitivity analysis is performed to derive the Eulerian derivative in both {volumetric} and boundary expressions. An optimal electrode morphology is {obtained through} gradient flow algorithms. The steady-state Poisson--Nernst--Planck system is efficiently solved by the Gummel fixed-point scheme with finite-element discretization, in which exponential coefficients with large variation are tackled with inverse averaging techniques. {Extensive} numerical experiments are performed to demonstrate the effectiveness of the proposed optimization model and corresponding numerical methods in enhancing charge storage in supercapacitors. It is expected that the proposed shape optimization approach provides a promising tool in the design of electrode morphology from a perspective of charge storage enhancement. 

\end{abstract}

{\bf Keywords: } 
Shape optimization,  sensitivity analysis, gradient flow, supercapacitors, charge storage


\section{Introduction}
‌The intense demand for clean energy has greatly driven the development of electrochemical technologies. Supercapacitors~\cite{supercapacitor_science_2}, also known as electric double layer capacitors, are promising electrical energy storage devices with wide application spectrum including regenerative braking, energy harvesting, purification, and electric vehicles~\cite{EDLC_app,Supercapacitors_ACS_EL_3}. In contrast to traditional dielectric capacitors, supercapacitors, which store electric energy in electric double layers (EDLs) forming at electrode-electrolyte interface, can {achieve} significantly larger energy densities due to small charge separation in nanoscale EDLs~\cite{Conway::1999}. Such storage mechanism that is absent of chemical reactions brings several unique features, such as higher charging/discharging efficiency, higher power density, and longer cycle life ~\cite{Ramon_JCIS18,Burke00, JiLiuLiuZhou_JPS2022, Xu_nature14, ZhaoZhouXuZhao_JPS2023, zhaoteng2023}. 

Optimal shape design has {witnessed} significant achievements in computational fluid dynamics \cite{MP}, structural mechanics \cite{AllaireCMAME05,Bendsoe,BB}, acoustics \cite{Osher2}, and photonic crystals \cite{CD}, etc. Shape optimization has various applications in electrochemistry as well \cite{Deng2018,MitchellOrtiz,Onishi2019,Yaji2018,YoonPark}. For example, the design of vanadium redox flow batteries \cite{Yaji2018} was investigated for rechargeable batteries in the storage system of renewable energy resources aiming for low carbon emissions. A topology optimization framework was established for designing porous electrodes with a hierarchy of length scales for low-conductivity materials \cite{Roy2022}. The level set method \cite{Ishizuka2020} was used for the design of anodes placed in an electroplating bath to achieve uniform deposition thickness. The optimized meso-scale structure for the electrolyte-anode interfaces of solid oxide fuel cells showed distinguished features \cite{Onishi2019}. The topological design of surface electrode distribution over piezoelectric sensors/actuators attached to a thin-walled shell structure was investigated \cite{Zhang2014} to reduce the sound radiation in an unbounded acoustic domain. Numerical methods for seeking optimal shape design include moving asymptotes \cite{Tortorelli2001,Leon2020}, threshold dynamics method \cite{HChen2022,HChen2024}, phase field method \cite{JLXZ,LiYi2022,YuLi2023}, and moving morphable components \cite{GZZ}, etc. However, it is worth noting that the governing system for modeling shape design in electrochemistry has been much simplified in literature. For instance, the detailed description of spatio-temporal distribution of ionic concentrations has been neglected and the governing system involves only the Poisson's equation for the chemical potential; e.g., \cite{Ishizuka2020,Onishi2019,Roy2022}. 

Ion transport in supercapacitors under electric fields has been successfully described by the well-known Poisson--Nernst--Planck (PNP) theory~\cite{WangPilon_JPCC13,YangJanssenLianRoij_jcp_22}. In such a theory, the electrodiffusion of ions {is} modeled by the Nernst--Planck equations under the gradient of electric potential, which in turn is governed by the Poisson's equation with charge sources stemming from mobile ions. In equilibrium, {the} ionic distribution in EDLs, in which charges are mainly stored in supercapacitors, can be described by the steady-state PNP system. Empirically, the charge storage, or energy density, can be enhanced with high electrode-electrolyte interface area to volume ratio, as larger electrode-electrolyte surface area allows more EDLs~\cite{Huang:ACIE:08a,Lian_PhysRevLett_2020,WVP:EA:11}. 
Sensitivity analysis was performed for the PNP system's solution {with respect to} (w.r.t.) physical coefficients \cite{Dione2019}. To the best of our knowledge, however, sensitivity analysis of the PNP system's solution w.r.t. geometric perturbations has not been studied for shape optimization in literature. In this work, we propose a shape optimization approach to maximize charge storage in supercapacitors through optimizing the electrode-electrolyte interface morphology. The total charge in a supercapacitor {is optimized} as an objective under the constraint of the steady-state PNP system, which has not been fully studied in previous works on optimal shape design in electrochemistry. 


For the sensitivity analysis of shape optimization, the \emph{Eulerian derivative} of a \emph{shape functional} can be derived via the velocity method \cite{SokolowskiZolesio1992}. The \emph{Eulerian derivative} is typically expressed in a boundary or domain formulation \cite{LS} (see, e.g., \cite{LZS,Zhang2024}). The former, characterized by the Hadamard-Zol'{e}sio structure theorem \cite{Defour}, is popular due to its concise formula. Variational numerical methods based on \emph{Eulerian derivative} can perform shape changes in shape design \cite{GLZ2022,Harbrecht,LiZhu2023,ZhuEig}. The derivation of \emph{Eulerian derivative} in boundary formulation involving extra introduction of the shape derivative thus requires more regularity on the computational domain. By the function space parametrization \cite{Defour}, the differentiability of the saddle point of a functional with respect to a real parameter { gives} explicit expressions of the Eulerian derivative in domain formulation. 

To numerically solve the PNP system, structure-preserving numerical methods that are able to preserve positivity of ionic concentrations and energy stability have been developed in literature \cite{Ding2019,HuHuang_NM20,LiuWangWiseYueZhou_2021,ShenXu_NM21}, especially for the case of large convection \cite{DingWangZhou_JCP2023,DingZhou_JCP2024}.  One of the most popular numerical schemes to solve steady-state PNP system is the Gummel fixed-point scheme \cite{Gummel1964,Markowich1986} which features excellent convergence properties even with { a} rough initial guess. By introducing the Slotboom transformation, the Gummel fixed-point scheme involves solving a semi-linear Poisson--Boltzmann type of equation for electric potential and the decoupled continuity equation for ionic concentration { alternately}. The inverse averaging on exponential coefficients for the decoupled continuity equation in finite element discretization has been proposed both in 2d \cite{Brezzi1989} and 3d \cite{averageJCP2022}, to overcome numerical instability caused by strong convection. The quality of the computational mesh is essential to obtain accurate approximations of ionic concentration and electric potential \cite{Brezzi1989,Prohl2009}. During shape evolution, a smooth descent direction calculated by a Cauchy-Riemann type gradient flow \cite{CR,LiZhu2023} { or} $H^1$ gradient flow \cite{ZhuEig} with uniform remeshing is used to keep the mesh shape regular and quasi-uniform.

The rest of the paper is organized as follows. In Section 2, we briefly recall the PNP system in a dimensionless form. Then we build the shape optimization model to maximize total charge storage {  subject to a} volume constraint. In Section 3, we utilize the framework of the velocity method  to perform shape sensitivity analysis by introducing a corresponding adjoint system.
In Section 4, a shape gradient algorithm is proposed by solving gradient flows to move the domain in a descent direction while preserving the quality of the mesh. In Section 5, we develop finite-element discretization of the state, the adjoint, and the gradient flow. In Section 6, various numerical experiments in 2d and 3d spaces are presented to demonstrate the effectiveness of the proposed algorithm.

\section{Shape optimization model}

\begin{figure}[htbp]
\centering
\begin{tikzpicture}[scale=3.5]
	\draw[thick,fill=gray!30!white] (1.0,1.0) -- (0.0,1.0) -- (0.0,0.0) -- (1.0,0.0) to[curve through =
	{(1.15,0.15) (1.2,0.2) (1.3,0.1) (1.4,0.05) (1.5,0.15) (1.55,0.1) (1.7,0.25) (1.4,0.35)  (1.8, 0.5) (1.5,0.65) (1.7,0.75) (1.55,0.9)  (1.5,0.85) (1.4,0.95) (1.3,0.9) (1.2,0.8) (1.15,0.85)}] (1.0,1.0);
	\draw[thick,dashed] (1.0,0) -- (1.85,0) -- (1.85,1.0) -- (1.0,1.0);
 	\draw[thick,dashed] (1.0,1.0) -- (1.0,0);
	\draw (0.5,0.5) node[thick, scale=1.25] {$\Omega_1$};
	\draw (1.35,0.5) node[thick, scale=1.25] {$\Omega_2$};
	\draw (-0.1,0.5) node[thick, scale=1.] {$\Gamma_{\rm in}$};
	\draw (0.5,-0.1) node[thick, scale=1.] {$\Gamma_1$};
	\draw (0.5,1.1) node[thick, scale=1.] {$\Gamma_1$};
	\draw (1.78,0.7) node[thick, scale=1.] {$\Gamma_2$};
\draw node[circle,draw][scale=0.5] at (0.75,0.25) {\textbf{+}};
\draw node[circle,draw][scale=0.5] at (0.25,0.3){\textbf{+}};
\draw node[circle,draw][scale=0.5] at (0.25,0.7){\textbf{+}};
\draw node[circle,draw][scale=0.5] at (1.4,0.7){\textbf{+}};
\draw node[circle,draw][scale=0.5] at (1.6,0.48){\textbf{+}};
\draw node[circle,draw][scale=0.5] at (1.6,0.2){\textbf{+}};
\draw node[circle,draw][scale=0.75] at (0.5,0.25) {\textbf{-}};
\draw node[circle,draw][scale=0.75] at (1.3,0.3) {\textbf{-}};
\end{tikzpicture}
\caption{Schematic illustration of a supercapacitor model with binary electrolytes. The bulk electrolyte domain $\Omega_1$ with boundaries $\Gamma_1$ and $\Gamma_{\text{in}}$ interconnects the domain $\Omega_2$ with a design boundary $\Gamma_2$ that is the electrolyte-electrode interface. }
\label{fig1}
\end{figure}

Let $\Omega \subset \mathbb{R}^d\ (d= 2,3)$ be an open bounded domain with Lipschitz boundary $\partial \Omega$. The domain $\Omega$ under consideration consists of two subdomains: $\overline{\Omega}= \overline{\Omega_1}\cup \overline{\Omega_2}$ with  disjoint boundaries satisfying $\partial\Omega = \overline{\Gamma_{\rm in}}\cup \overline{\Gamma_1}\cup \overline{\Gamma_2}$; cf. Fig.~\ref{fig1}.  
Denote by $\phi:\Omega\rightarrow\mathbb{R}$ the electric potential function and $c_i:\Omega\rightarrow\mathbb{R}\ (i=1,2,\cdots, N)$ the ionic concentration of the $i$-th species with valence $z_i\in \mathbb{R}$, where $N\in \mathbb{N}^+$ denotes the number of ionic species. Consider a boundary value problem of a steady-state Poisson--Nernst--Planck (PNP) system in a dimensionless form ~\cite{WangPilon_JPCC13,YangJanssenLianRoij_jcp_22}
\begin{equation}\label{PNP}
\left\{
\begin{aligned}
& - \nabla \cdot (\nabla c_i + z_i c_i \nabla \phi)=0,\ i=1,2,\cdots,N,  \quad &&{\rm in}\ \Omega,\\
& - \epsilon \Delta \phi = \bm z^{\rm T}\bm c, \quad &&{\rm in}\ \Omega, \\
&\bm c = \bm c^\infty, &&{\rm on}\  \Gamma_{\rm in},\\
& \frac{\partial c_i}{\partial \bm n} + z_ic_i \frac{\partial\phi}{\partial \bm n} = 0,\ i=1,2,\cdots,N, &&{\rm on}\ \Gamma_1 \cup\Gamma_2,\\
&\frac{\partial \phi}{\partial \bm n} = 0, &&{\rm on}\  \Gamma_{\rm in}\cup \Gamma_{1},\\ &\phi=g, &&{\rm on}\  \Gamma_2,
\end{aligned}\right.
\end{equation}
where $\bm c = [c_1,c_2,\cdots,c_N]^{\rm T}$, $\bm c^{\infty} = [c_1^{\infty}, c_2^{\infty}, \cdots,c_N^{\infty}]^{\rm T}$, $\bm z = [z_1,z_2,\cdots,z_N]^{\rm T}$, $\epsilon>0$ is a dimensionless rescaled dielectric constant, and $\bm{n}$ is an outward unit normal on the boundary. Here no-flux boundary conditions are imposed on $\Gamma_1$ and $\Gamma_2$. The Dirichlet boundary conditions on $\Gamma_{\rm in}$ are imposed for the ionic concentrations with  $c_i^\infty: \Gamma_{\rm in} \rightarrow \mathbb{R}^{+}$ being the bulk concentrations to describe that the supercapacitor is connected to an ionic reservoir with constant concentrations. 
For the electric potential, homogeneous Neumann boundary conditions are imposed on $\Gamma_{\rm in}\cup \Gamma_1$ and Dirichlet boundary { condition is} imposed on $\Gamma_2$ with given boundary data $g\in H^1(\mathbb{R}^d)$. {  For metal electrodes, it is reasonable to assume that the boundary data $g$ of the electric potential is a uniform constant.}

With ionic concentrations governed by the steady-state PNP system~\eqref{PNP}, let the integral $\int_{\Omega} \bm z^{\rm T} \bm c \dx$ represent the total net charge stored in a supercapacitor. It is of practical interest for electrochemical energy storage devices to maximize the total net charge~\cite{ BazantPRE,Simon_NatEnergy2016,Lian_PhysRevLett_2020}. We here consider fixed boundaries $\Gamma_{\rm in}$ and $\Gamma_1$ for the bulk domain and optimize the electrolyte-electrode interface $\Gamma_2$, at which the formed electric double layers account for the main net charge storage.
Generally, the shape optimization model is not well-posed without geometric constraints (e.g., volume or perimeter). To maximize charge storage, the following volume-constrained shape optimization model is considered: 
\begin{equation}\label{cost}
\min_{\Omega\in\mathcal{U}_{ad}} \mathcal{J}(\Omega, \bm c(\Omega)),
\end{equation}
where 
\begin{equation}\label{CostFun}
\mathcal{J}(\Omega, \bm c(\Omega))=\int_\Omega \sum_{i=1}^N j(c_i) \dx,
\end{equation}
with $j(c_i)=-z_i c_i$. Here $\bm{c}=\bm{c}(\Omega)$ is the solution to the system \eqref{PNP} and the admissible set reads
\begin{equation*}
\mathcal{U}_{ad}=\{\Omega\subset\mathbb{R}^d: \mathcal{P}_1(\Omega) = \mathcal{C}_1,\ \Omega\ \text{is Lipschitz},\ \Gamma_{\rm in}\cup {\Gamma_1}\ \text{is fixed} \},
\end{equation*}
where $\mathcal{P}_1(\Omega)$ is the geometric Lebesgue measure of $\Omega$ and $\mathcal{C}_1>0$ is a prescribed number. 
{  For the sake of readers' convenience, Table \ref{tab:my_label} lists all the technical parameters used in the PNP system and the following Algorithm \ref{alg1}.}

\begin{table}[htbp]
    \centering
\begin{tabular}{|c|c|}
\hline
 Terminologies & \textbf{Nomenclature} \\
\hline $c_i$ & ionic
concentration\\
\hline $\phi$ & electric potential \\
\hline $z_i$ & valence of the $i$-th species \\
\hline $\epsilon$ & dimensionless rescaled
dielectric constant\\
\hline $t$ & time variable\\
\hline $T_t$ & domain mapping\\
\hline $\bm{\theta}$ & velocity field\\
\hline $s_i$ & adjoint ionic concentration\\
\hline $\psi$ & adjoint electric potential\\
\hline
$l$ & Lagrange multiplier \\
\hline
$\beta$ & penalty parameter for volume constraint \\
\hline
$\mathcal{C}_1$ & target volume  \\
\hline
$\epsilon_0$ &  diffusion parameter in $H^1$ gradient flow \\
\hline
$\alpha$ & penalty parameter in C-T gradient flow\\
\hline
$\tau$ & stopping tolerance for solving the PNP system \\
\hline
$\delta_k$ & step size for mesh movement \\
\hline
$M$ & penalty parameter for descent direction \\
\hline
$\gamma$ & penalty parameter for perimeter control \\
\hline
$\epsilon$ & rescaled dielectric constant \\
\hline 
$c_i^\infty$ & bulk concentrations \\
\hline  
$g$ & electric potential on boundaries \\
\hline
\end{tabular}
    \caption{{ Terminologies concerning the PNP system and gradient flow in the Algorithm \ref{alg1}}.}
    \label{tab:my_label}
\end{table}

\section{Shape sensitivity analysis}
In this section, we perform \emph{shape sensitivity analysis} using the velocity method \cite{Defour,SokolowskiZolesio1992} to derive Eulerian derivatives in both volumetric and boundary expressions. Standard notations for Sobolev spaces are used in the following. Introduce scalar sets $H^1_{\Gamma_2, d}(\Omega):=\{w\in H^1(\Omega) | w = g\ {\rm on }\ \Gamma_2 \}$ for the electric potential and $H^1_{\Gamma_{\rm in}, i}(\Omega):=\{w\in H^1(\Omega) | w = c_i^\infty\ {\rm on }\ \Gamma_{\rm in} \}$ ($i=1,2,\cdots,N)$ for each ionic concentration function $c_i$, and $\textbf{H}^1_{\Gamma_{\rm in}, d}(\Omega):= H^1_{\Gamma_{\rm in}, 1}(\Omega)\times H^1_{\Gamma_{\rm in}, 2}(\Omega)\times\cdots\times H^1_{\Gamma_{\rm in}, N}(\Omega)$ for vectors of concentration functions. Let $H^1_{\Gamma_{\rm in}, 0}(\Omega):=\{w\in H^1(\Omega)| w=0\ {\rm on}\ \Gamma_{\rm in} \}$ and $H^1_{\Gamma_2,0}(\Omega):=\{w\in H^1(\Omega)| w=0\ {\rm on}\ \Gamma_2 \}$ be two subspaces of $H^1(\Omega)$. Introduce a Sobolev space $\mathbf{H}^1(\Omega):= [H^1(\Omega)]^N$ and its subspace $\textbf{H}^1_{\Gamma_{\rm in},0}(\Omega):=\{\bm w\in \textbf{H}^1(\Omega)| \bm w =\bm 0\ {\rm on}\ \Gamma_{\rm in} \}$.

Let $\vec{\theta}\in C([0,\tau_0);\mathcal{D}^1(\mathbb{R}^d,\mathbb{R}^d))$ be a velocity field, with a sufficiently small { $\tau_0>0$}, characterizing morphology deformation over the whole domain, where $\mathcal{D}^1(\mathbb{R}^d,\mathbb{R}^d)$ denotes the space of all continuously differentiable functions with compact support in $\mathbb{R}^d$. For each { $0<t<\tau_0$}, we denote $\vec{\theta}(t)(\bm x):=\vec{\theta}(t,\bm x)\in\mathcal{D}^1(\mathbb{R}^d,\mathbb{R}^d)$. { For a given} $\bm X\in \mathbb{R}^d$, it generates a family of transformations
\begin{equation*}
T_t(\vec{\theta})(\bm X)=\bm x(t,\bm X),
\end{equation*}
through the following dynamical system:
\begin{equation*}\label{velocityequ}
\left\{
\begin{aligned}
\frac{{\rm d}\bm x}{{\rm d}t}(t,\bm X) &=\vec{\theta}(t,\bm x(t,\bm X)),\ && t\in (0,\tau_0), \\
\bm x(0,\bm X) &=\bm X,\ && t=0.
\end{aligned}
\right.
\end{equation*}
The transformed domain $T_t(\vec{\theta})(\Omega)$ { is referred to as} $\Omega_t$. We introduce the following definitions { related to} shape calculus.
\begin{Definition}\cite{Defour,HaslingerMakinen2003,SokolowskiZolesio1992}
Let $\mathcal{J}(\Omega, \bm c(\Omega))$ be a {shape functional} $\mathcal{J}(\cdot, \bm c(\cdot)):\Omega\mapsto\mathbb{R}$ and  $\vec{\theta}\in C([0,\tau_0);\mathcal{D}^1(\mathbb{R}^d,\mathbb{R}^d))$ be a velocity field. 
\begin{itemize}
    \item The {Eulerian derivative} of $\mathcal{J}(\Omega, \bm c(\Omega))$ at $\Omega$ in the direction $\vec{\theta}$ is defined by 
	\begin{equation*}\label{eulerder}
	{\rm d}\mathcal{J}(\Omega;\vec{\theta}):=\lim_{t\rightarrow 0^{+}} \frac{\mathcal{J}(\Omega_t, \bm c(\Omega_t))-\mathcal{J}(\Omega, \bm c(\Omega))}{t}.
	\end{equation*}
    Denote by ${\rm d}_{\mathcal{V}}\mathcal{J}(\Omega;\vec{\theta})$ and  ${\rm d}_{\mathcal{S}}\mathcal{J}(\Omega;\vec{\theta})$ the volume expression and the surface expression of the Eulerian derivative, respectively.
    
    \item The {material derivative} of a scalar state variable $w\in H^1(\Omega)$ in the direction $\vec{\theta}$ is defined by 
\begin{equation*}
\dot{w}(\Omega ; \vec{\theta}):=\lim _{t \rightarrow 0^{+}} \frac{w\left(\Omega_{t}\right) \circ T_{t}(\Omega)-w(\Omega)}{t}.
\end{equation*}

\item The {shape derivative} of $w$ in the direction $\vec{\theta}$ is 
defined by
\begin{equation*}
w^\prime (\Omega;\vec{\theta}) := \dot{w} (\Omega;\vec{\theta})-\nabla w \cdot \vec{\theta}(0).
\end{equation*}
\end{itemize}
For a vector state variable $\bm w:= [w_1, w_2,\cdots, w_N]^{\rm T}$, its material derivative, denoted by $\dot{\bm w}$, is defined by the material derivative of each component, i.e., $\dot{\bm w}:=[\dot{w}_1, \dot{w}_2,\cdots, \dot{w}_N]^{\rm T}$. Similarly, for a vector state variable $\bm w:= [w_1, w_2,\cdots, w_N]^{\rm T}$, its shape derivative is defined by $${\bm w}^\prime:=[w_1^\prime, w_2^\prime,\cdots, w_N^\prime]^{\rm T}.$$
\end{Definition}
Assume that the boundary is sufficiently smooth enough. Then tangential gradient and divergence operators can be defined as follows:
\begin{Definition}
Let $\Omega$ be of $C^2$ class with the unit outward normal vector $\bm n$ on $\partial\Omega$. Suppose that $U\subset \mathbb{R}^d$ is an open bounded set such that $\Omega\subset\subset U$.
\begin{itemize}
    \item The {tangential divergence} of  a vector variable $\bm w\in C^1(U;\mathbb{R}^d)$ along the boundary { $\partial\Omega$} is defined by
\begin{equation*}
{\rm div}_{\Gamma}\,\bm w:= {\rm div}\,\bm w- \bm n^{\rm T} {\rm D}\bm w \bm n,
\end{equation*}
where ${\rm div}(\cdot)$ and $\rm D(\cdot)$ are the standard divergence and vector gradient operators defined { over} $\textbf{H}^1(U)$, respectively.

\item The {tangential gradient} $\nabla_{\Gamma}$ of a scalar variable $w\in C^1 (U;\mathbb{R})$ on the boundary { $\partial\Omega$} is defined by
\begin{equation*}
\nabla_{\Gamma} w := \nabla w-\frac{\partial w}{\partial \bm n}\bm n.
\end{equation*}
\end{itemize}
\end{Definition}

\begin{Proposition}\cite{HaslingerMakinen2003} For a deformation field $\vec{\theta}=[\theta_1, \theta_2,\cdots,\theta_d]^{\rm T}\in  {W}^{1,\infty}$$(\Omega)^d$, the following results hold
 \begin{equation*}
 \begin{aligned}
 &(1) \left.{\rm D} T_{t}\right|_{t=0}=\mathrm{Id},\quad &&(2) \left.\frac{{\rm d}}{{\rm d} t} T_{t}\right|_{t=0}=\vec{\theta},\\
&(3) \left.\frac{{\rm d}}{{\rm d} t} {\rm D} T_{t}\right|_{t=0}={\rm D} {\vec{\theta}}=\left(\frac{\partial {\theta}_{i}}{\partial x_{j}}\right)_{i, j=1}^{{\rm d}},
\quad &&(4) \left.\frac{{\rm d}}{{\rm d} t}{\rm D} T_{t}^{\mathrm{T}}\right|_{t=0}={\rm D} {\vec{\theta}}^{\mathrm{T}},\\
&(5) \left.\frac{{\rm d}}{{\rm d} t}\left({\rm D} T_{t}^{-1}\right)\right|_{t=0}=-{\rm D} {\vec{\theta}},
\quad &&(6) \left.\frac{{\rm d}}{{\rm d} t} J_{t}\right|_{t=0}=\operatorname{div} {\vec{\theta}},
\end{aligned}
\end{equation*}
where ${\rm Id}$ is an identity matrix and $J_t=\det{({\rm D}T_t)}$ denotes the determinant of the Jacobian.
\end{Proposition}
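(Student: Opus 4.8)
The plan is to derive all six identities directly from the defining flow ODE $\frac{{\rm d}\bm x}{{\rm d}t}(t,\bm X)=\vec{\theta}(t,\bm x(t,\bm X))$ together with the initial condition $\bm x(0,\bm X)=\bm X$, using the regularity guaranteed by the hypothesis $\vec{\theta}\in W^{1,\infty}(\Omega)^d$. Since $\vec{\theta}$ is Lipschitz in the spatial variable, the Cauchy--Lipschitz theorem yields a unique flow $T_t$ that depends differentiably on both $t$ and $\bm X$ on $[0,\tau)$ for $\tau$ small, and this differentiable dependence on the initial data is what licenses the interchange of $\partial_t$ with the spatial derivative ${\rm D}={\rm D}_{\bm X}$ in the steps below.

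First I would settle (1) and (2). Evaluating the initial condition gives $T_0=\mathrm{Id}$, whence ${\rm D}T_0=\mathrm{Id}$, which is (1); evaluating the flow ODE at $t=0$ and using $\bm x(0,\bm X)=\bm X$ gives $\frac{{\rm d}}{{\rm d}t}T_t\big|_{t=0}=\vec{\theta}(0,\bm X)=\vec{\theta}$, which is (2). For (3), I would differentiate the flow ODE with respect to $\bm X$ and commute the two derivatives to obtain the variational equation $\frac{{\rm d}}{{\rm d}t}{\rm D}T_t={\rm D}\vec{\theta}(t,T_t)\,{\rm D}T_t$ via the chain rule; evaluating at $t=0$ and inserting (1) produces $\frac{{\rm d}}{{\rm d}t}{\rm D}T_t\big|_{t=0}={\rm D}\vec{\theta}$. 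Identity (4) is then simply the transpose of (3).

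For (5), I would differentiate the identity ${\rm D}T_t\,{\rm D}T_t^{-1}=\mathrm{Id}$ in $t$, giving $\frac{{\rm d}}{{\rm d}t}({\rm D}T_t)\,{\rm D}T_t^{-1}+{\rm D}T_t\,\frac{{\rm d}}{{\rm d}t}({\rm D}T_t^{-1})=0$, and solve for the second term at $t=0$ using (1) and (3), which yields $-{\rm D}\vec{\theta}$. Finally, (6) follows from Jacobi's formula $\frac{{\rm d}}{{\rm d}t}J_t=J_t\,\operatorname{tr}\!\big(({\rm D}T_t)^{-1}\frac{{\rm d}}{{\rm d}t}{\rm D}T_t\big)$ evaluated at $t=0$, where $J_0=\det(\mathrm{Id})=1$ and, by (3), $\operatorname{tr}({\rm D}\vec{\theta})=\operatorname{div}\vec{\theta}$.

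The only genuine obstacle is the regularity bookkeeping rather than any single computation: one must verify that $T_t$ is $C^1$ jointly in $(t,\bm X)$ so that the mixed derivative $\frac{{\rm d}}{{\rm d}t}{\rm D}T_t$ exists and coincides with ${\rm D}\frac{{\rm d}}{{\rm d}t}T_t$, and that ${\rm D}T_t$ stays invertible for small $t$ (which follows by continuity from ${\rm D}T_0=\mathrm{Id}$). Granting the standard theory of differentiable dependence on initial data for Lipschitz vector fields, each of the six assertions reduces to a routine application of the chain rule, the product rule, and Jacobi's formula.
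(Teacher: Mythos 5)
The paper offers no proof of this proposition: it is quoted verbatim from Haslinger and M\"{a}kinen \cite{HaslingerMakinen2003}, so there is no internal argument to compare against. Your derivation is the standard textbook one and is essentially correct: (1) and (2) follow from evaluating the initial condition and the flow ODE at $t=0$; (3) is the variational (linearized) equation $\frac{{\rm d}}{{\rm d}t}{\rm D}T_t = {\rm D}\vec{\theta}(t,T_t)\,{\rm D}T_t$ obtained by differentiating the ODE in $\bm X$ and commuting derivatives; (4) is its transpose; (5) comes from differentiating ${\rm D}T_t\,{\rm D}T_t^{-1}=\mathrm{Id}$; and (6) is Jacobi's formula with $J_0=1$ and $\operatorname{tr}({\rm D}\vec{\theta})=\operatorname{div}\vec{\theta}$. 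These are exactly the computations behind the cited result.

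One caveat on your regularity discussion: for $\vec{\theta}$ merely in $W^{1,\infty}(\Omega)^d$, the Cauchy--Lipschitz theorem gives a flow that is Lipschitz---not $C^1$---in the initial datum, so the claim that the flow "depends differentiably on both $t$ and $\bm X$" overstates what Lipschitz regularity alone delivers; pointwise differentiable dependence on $\bm X$ classically requires $\vec{\theta}\in C^1$. Under $W^{1,\infty}$ alone, ${\rm D}T_t$ exists for a.e.\ $\bm X$ (Rademacher), and identities (3)--(6) hold almost everywhere, or as equalities of maps $t\mapsto {\rm D}T_t$ valued in $L^\infty$. This does not damage your argument in the context of the paper, since the velocity fields actually used there lie in $C([0,\tau);\mathcal{D}^1(\mathbb{R}^d,\mathbb{R}^d))$, i.e.\ are $C^1$ in space, where your interchange of $\partial_t$ and ${\rm D}_{\bm X}$ is fully justified; but if you want the statement at the stated $W^{1,\infty}$ generality, you should say explicitly in what (a.e.\ or weak) sense the six identities hold.
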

In shape optimization, apart from geometric constraints, { the problem} generally involves physical state constraints. Treating the state system as an equality constraint and using the Lagrange multiplier method typically { yields} dual variables that are solutions of an adjoint system. {   For ease of presentation of shape sensitivity analysis, we assume that the potential $\phi$ has zero boundary conditions on $\Gamma_2$. Otherwise, one can replace the original potential function with $\phi-g$ in \eqref{PNP} to obtain the homogeneous Dirichlet boundary condition on $\Gamma_2$, since the Dirichlet data $g$ is a constant defined over the whole $\mathbb{R}^d$. }

\begin{Lemma}\label{adjLemma}
Suppose that {  $(\bm c,\phi)\in \textbf{H}^1_{\Gamma_{\rm in},d}(\Omega)\times H^1_{\Gamma_2,0}(\Omega)$} is the solution of PNP system (\ref{PNP}). The adjoint system to the optimization problem (\ref{cost}) constrained by (\ref{PNP}) is to find a weak solution $(\bm s,\psi)\in \textbf{H}^1_{\Gamma_{\rm in},0}(\Omega)\times H^1_{\Gamma_2,0}(\Omega)$ 
satisfying the following boundary value problem
\begin{equation}\label{adjPro}\left\{
\begin{aligned}
&-\Delta s_i+ z_i\nabla\phi\cdot \nabla s_i+z_i \psi = j^\prime(c_i),\ i=1,2,\cdots,N, \quad &&{\rm in}\ \Omega,\\
&-\epsilon\Delta \psi = -\sum_{i=1}^N \nabla\cdot(z_i c_i\nabla s_i), \quad &&{\rm in}\ \Omega,\\
&\bm s =0, && {\rm on}\ \Gamma_{\rm in},\\
&\frac{\partial s_i}{\partial \bm n}=0, && {\rm on}\ \Gamma_1\cup \Gamma_2,\\
&\psi = 0, && {\rm on}\ \Gamma_2,\\
&\epsilon \frac{\partial \psi}{\partial \bm n} =\sum_{i=1}^N z_ic_i \frac{\partial s_i}{\partial \bm n}, && {\rm on}\ \Gamma_{\rm in}\cup\Gamma_1.
\end{aligned}\right.
\end{equation}
\end{Lemma}
\begin{proof}
By the weak formulation of the PNP system (\ref{PNP}), we define a functional 
\begin{equation}\label{adjLemmaS1}
\begin{aligned}
\mathcal{G}(\Omega, \bm c, \phi, \bm s,\psi):= \sum_{i=1}^N \int_\Omega (\nabla c_i +z_i c_i \nabla \phi)\cdot \nabla s_i\dx -\int_\Omega \epsilon\nabla\phi\cdot \nabla \psi- \bm z^{\rm T} \bm c \psi\dx,
\end{aligned}
\end{equation}
where the test functions $(\bm s,\psi)\in \textbf{H}^1_{\Gamma_{\rm in},0}(\Omega)\times H^1_{\Gamma_2,0}(\Omega)$. Then we introduce a Lagrangian
\begin{equation}\label{adjLemmaS2}
\mathcal{L}(\Omega,\bm c,\phi, \bm s,\psi):= \mathcal{J}(\Omega, \bm c(\Omega)) - \mathcal{G}(\Omega, \bm c,\phi, \bm s,\psi).
\end{equation}
A saddle point of $\mathcal{L}$ is characterized by
\begin{eqnarray}
    &\frac{\partial \mathcal{L}}{\partial \tilde{s}_i}(\delta s_i)=\frac{\partial \mathcal{L}}{\partial \tilde{\psi}}(\delta \psi)=0\quad \forall \,  \delta\bm s =[\delta s_1, \delta s_2,\cdots, \delta s_N] \in \textbf{H}^1_{\Gamma_{\rm in},0}(\Omega)\quad\forall \delta \psi \in H^1_{\Gamma_2,0}(\Omega),\label{KKT1}
    \\
    &\frac{\partial \mathcal{L}}{\partial \tilde{c_i}}(\delta c_i)=\frac{\partial \mathcal{L}}{\partial \tilde{\phi}}(\delta \phi)=0\quad \forall \, \delta \bm c=[\delta c_1, \delta c_2,\cdots, \delta c_N]\in \textbf{H}^1_{\Gamma_{\rm in},0}(\Omega)\quad\forall \delta \phi \in H^1_{\Gamma_2,0}(\Omega).\label{KKT2}
\end{eqnarray}
Eq.~\eqref{KKT1} corresponds to the weak formulation of PNP system (\ref{PNP}) {   with $g=0$}, and Eq.~(\ref{KKT2}) implies an adjoint state system that we shall derive in the following. 

The Fr\'{e}chet derivative of the Lagrangian (\ref{adjLemmaS2}) with respect to each concentration yields the first-order necessary optimality condition
\begin{equation*}
\begin{aligned}
0 = &\frac{\partial \mathcal{L}}{\partial \tilde{c_i}}(\delta c_i)\\
=& \int_\Omega -j^\prime(c_i)\delta c_i + (\nabla \delta c_i +z_i \delta c_i \nabla \phi)\cdot \nabla s_i\dx +\int_\Omega  z_i \delta c_i \psi\dx \\
=& \int_\Omega -j^\prime(c_i)\delta c_i -\Delta s_i \delta c_i+z_i\nabla\phi\cdot\nabla s_i \delta c_i +  z_i \psi\delta c_i \dx +\int_{\partial\Omega}\frac{\partial s_i}{\partial \bm n}\delta c_i \ds \quad \forall \delta c_i \in H^1_{\Gamma_{\rm in},0}(\Omega),
\end{aligned}
\end{equation*}
which further implies that
\begin{equation*}\left\{
\begin{aligned}
&-\Delta s_i+ z_i\nabla\phi\cdot \nabla s_i+z_i \psi = j^\prime(c_i),\quad &&{\rm in}\ \Omega,\\
&\frac{\partial s_i}{\partial \bm n}=0, && {\rm on}\ \Gamma_1\cup\Gamma_2,\\
&s_i=0,&&{\rm on}\ \Gamma_{\rm in}.
\end{aligned}\right.
\end{equation*}
The Fr\'{e}chet derivative of the Lagrangian (\ref{adjLemmaS2}) w.r.t. the electric potential leads to the optimality condition
\begin{equation*}
\begin{aligned}
0 =& \frac{\partial \mathcal{L}}{\partial \tilde{\phi}}(\delta \phi)\\
=&\sum_{i=1}^N \int_\Omega z_i c_i \nabla \delta \phi \cdot \nabla s_i\dx- \int_\Omega \epsilon \nabla\delta\phi\cdot \nabla \psi\dx,\\
=& \int_\Omega -\sum_{i=1}^N  \nabla\cdot (z_i c_i\nabla s_i)\delta \phi +\epsilon\Delta\psi \delta\phi \dx + \int_{\Gamma_{\rm in}\cup \Gamma_1}\bigg( \sum_{i=1}^N z_ic_i \frac{\partial s_i}{\partial \bm n}-\epsilon \frac{\partial \psi}{\partial \bm n}  \bigg)\delta\phi \ds \quad \forall \delta \phi \in H^1_{\Gamma_2,0}(\Omega),
\end{aligned}
\end{equation*}
which further yields that
\begin{equation*}\left\{
\begin{aligned}
&-\epsilon\Delta \psi = -\sum_{i=1}^N \nabla\cdot(z_i c_i\nabla s_i), \quad &&{\rm in}\ \Omega,\\
&\psi = 0, && {\rm on}\ \Gamma_2,\\
&\sum_{i=1}^N z_ic_i \frac{\partial s_i}{\partial \bm n}-\epsilon \frac{\partial \psi}{\partial \bm n} = 0, && {\rm on}\ \Gamma_{\rm in}\cup \Gamma_1.
\end{aligned}\right.
\end{equation*}
This completes the derivation of the adjoint system (\ref{adjPro}).
\end{proof}

The shape sensitivity analysis is based on the framework of the Lagrangian method \cite{Defour} by introducing a Lagrangian to convert the original constrained optimization problem into a saddle point problem. We introduce the following \emph{function space parametrization} method; cf.~\cite[Section 5.3]{Defour}. For both the state and adjoint state functions defined on the perturbed domain $\Omega_t$, we have the following parametrization
\begin{equation*}
H^1 (\Omega_t) = \{\phi\circ T_t^{-1}: \phi\in H^1(\Omega) \},
\end{equation*}
where $t$ is small enough such that $T_t$ and $T_t^{-1}$ are diffeomorphisms. Lagrangian methods in shape optimization allow { us to} compute the Eulerian derivative of the shape functional depending on the solution of the governing system without the need to calculate the material derivative of state variables.  For any vector functions $\bm w = [w_1,w_2,\cdots,w_N]^{\rm T}$ and $\bm y = [y_1,y_2,\cdots,y_N]^{\rm T}$, the Frobenius inner products for gradients and tangential gradients { are} defined by ${\rm D}\bm w : {\rm D} \bm y:= \sum_{i=1}^N \nabla w_i\cdot \nabla y_i$ and ${\rm D}_\Gamma\bm w : {\rm D}_\Gamma \bm y:= \sum_{i=1}^N \nabla_\Gamma w_i\cdot \nabla_\Gamma y_i$, respectively.
\begin{Theorem}\label{dJVolThm}
Let the  $\Omega$ be a bounded domain with a Lipschitz continuous boundary. Suppose that  {  $(\bm c,\phi)\in \textbf{H}^1_{\Gamma_{\rm in},d}(\Omega)\times H^1_{\Gamma_2,0}(\Omega)$} is the solution of (\ref{PNP}) and $(\bm s,\psi)\in \textbf{H}^1_{\Gamma_{\rm in},0}(\Omega)\times H^1_{\Gamma_2,0}(\Omega)$ is the solution of the adjoint problem (\ref{adjPro}). Then the Eulerian derivative of the shape functional { (\ref{CostFun})} in domain expression reads
\begin{equation}\label{dJVol}
\begin{aligned}
{\rm d}_\mathcal{V}\mathcal{J}(\Omega,\vec{\theta})&= \int_\Omega (-\bm z^{\rm T} \bm c \psi-\bm z^{\rm T} \bm c) {\rm div}\vec{\theta}+M(\vec{\theta}){\rm D} \bm c : {\rm D}\bm s +  M(\vec{\theta}) {\rm D}\Phi : ({\rm D} \bm s \bm Z\bm C)+ \epsilon M(\vec{\theta})\nabla \phi\cdot\nabla\psi\dx,
\end{aligned}
\end{equation}
where the matrix function $M(\vec{\theta}):={\rm div}\vec{\theta}{\rm Id}-{\rm D}\vec{\theta}^{\rm T}-{\rm D}\vec{\theta}$
and
\begin{equation}\label{ZCPHI}
\begin{aligned}
    \bm Z:= {\rm diag}[z_1,z_2,\cdots,z_N],\quad \bm C:= {\rm diag}[c_1,c_2,\cdots,c_N],\quad \Phi:= [\phi,\phi,\cdots,\phi]^{\rm T}\in [H_{\Gamma_2,d}^1(\Omega)]^N,
\end{aligned}
\end{equation}
with ${\rm diag}$ denoting a diagonal matrix formed by a vector and ${\rm Id}$ being the identity matrix.
\end{Theorem}
\begin{proof}
{  We formally present the derivation of Eulerian derivative below. Define} the Lagrangian with Lagrange multipliers:
\begin{equation*}
    \mathcal{L}(\Omega, \bm c, \phi, \bm s, \psi) = \sum_{i=1}^N \int_{\Omega}  j(c_i)\dx+\sum_{i=1}^N \int_\Omega (\nabla c_i +z_i c_i \nabla \phi)\cdot \nabla s_i\dx +\int_\Omega \epsilon\nabla\phi\cdot \nabla \psi- \bm z^{\rm T} \bm c \psi\dx,
\end{equation*}
where $(\bm c, \phi)$ is the solution of \eqref{PNP} and $(\bm s, \psi)$ are adjoint variables governed by (\ref{adjPro}). Then the Lagrangian $\mathcal{L}$ on the perturbed domain $\Omega_t$ reads
\begin{equation*}
\begin{aligned}
    \mathcal{L}(\Omega_t, \bm c_t, \phi_t, \bm s_t, \psi_t) = &\sum_{i=1}^N \int_{\Omega_t}  j(c_{i,t})\dx+\sum_{i=1}^N \int_{\Omega_t} (\nabla c_{i,t} +z_i c_{i,t} \nabla \phi_t)\cdot \nabla s_{i,t}\dx +\int_{\Omega_t} \epsilon\nabla\phi_t\cdot \nabla \psi_t- \sum_{i=1}^N z_i c_{i,t} \psi_t\dx,
    \end{aligned}
\end{equation*}
where $c_{i,t}=c_i \circ T^{-1}_t$, $s_{i,t}=s_i \circ T^{-1}_t$, $\phi_{t}=\phi \circ T^{-1}_t$, and $\psi_{t}=\psi \circ T^{-1}_t$ with $c_{i}\in H^1_{\Gamma_{\rm in}, i}(\Omega), s_{i}\in H^1_{\Gamma_{\rm in}, 0}(\Omega)$, {  $\phi\in H^1_{\Gamma_2, 0}(\Omega)$}, 
 and $\psi \in H^1_{\Gamma_2, 0}(\Omega)$. 
 We next define
\begin{equation*}
\tilde{\mathcal{L}}(t,\bm c,\phi,\bm s,\psi) = \mathcal{L}(\Omega_t, \bm c\circ T^{-1}_t, \phi\circ T^{-1}_t, \bm s\circ T^{-1}_t, \psi\circ T^{-1}_t).
\end{equation*}
By definition, $\tilde{\mathcal{L}}(t, \cdot)$ is given by 
\begin{equation}\label{GOmegat}
\begin{aligned}
 \tilde{\mathcal{L}}(t,\bm c,\phi,\bm s,\psi)= & \sum_{i=1}^N \int_{\Omega_t}  -z_i (c_i\circ T^{-1}_t) \dx+\sum_{i=1}^N \int_{\Omega_t} \big[\nabla (c_{i}\circ T^{-1}_t) +z_i (c_{i}\circ T^{-1}_t) \nabla (\phi\circ T^{-1}_t)\big]\cdot \nabla (s_{i}\circ T^{-1}_t)\dx \\
 &+\int_{\Omega_t} \epsilon\nabla(\phi\circ T^{-1}_t)\cdot \nabla (\psi\circ T^{-1}_t)- \sum_{i=1}^N z_i (c_{i}\circ T^{-1}_t) (\psi\circ T^{-1}_t)\dx.
 \end{aligned}
\end{equation}
Rewriting this functional on the fixed domain $\Omega$ by using the transformation $T_t$ gives
 \begin{equation*}
\begin{aligned}
 \tilde{\mathcal{L}}(t,\bm c,\phi,\bm s,\psi)= &  \int_{\Omega}  -\bm z^{\rm T} \bm c J_t \dx+\sum_{i=1}^N \int_{\Omega}\mathcal{A}(t)\nabla c_i\cdot \nabla s_i +z_i \mathcal{A}(t) c_i \nabla \phi \cdot \nabla s_i \dx \\
 &+\int_{\Omega} \epsilon\mathcal{A}(t) \nabla \phi\cdot \nabla \psi- \bm z^{\rm T} \bm c \psi J_t\dx,
 \end{aligned}
\end{equation*}
where $\mathcal{A}(t)=J_t [\md T_t]^{-1}[\md T_t]^{-{\rm T}}$. Taking derivative of $\tilde{\mathcal{L}}(t,\cdot)$ with respect to $t$ yields
\begin{equation*}
\begin{aligned}
 \partial_t \tilde{\mathcal{L}}(t,\bm c,\phi,\bm s,\psi)= &  \int_{\Omega}  -\bm z^{\rm T} \bm c J_t^\prime \dx+\sum_{i=1}^N \int_{\Omega}\mathcal{A}^\prime(t)\nabla c_i\cdot \nabla s_i +z_i \mathcal{A}^\prime(t) c_i \nabla \phi \cdot \nabla s_i \dx \\
 &+\int_{\Omega} \epsilon\mathcal{A}^\prime(t) \nabla \phi\cdot \nabla \psi- \bm z^{\rm T} \bm c \psi J_t^\prime\dx.
 \end{aligned}
\end{equation*}
Denoting $\mathcal{Q}(t):= ({\rm D}T_t)^{-1}$ and using properties that
\begin{equation*}
\begin{aligned}
&\mathcal{Q}^\prime(t) = -\mathcal{Q}(t) {\rm D}\bm \theta(t) \circ T_t,\\
&\mathcal{A}^\prime(t) \bm \tau: \bm \sigma=[\bm \tau \mathcal{Q}^\prime(t)]:[\bm \sigma \mathcal{Q}(t)]+[\bm \tau \mathcal{Q}(t)]:[\bm \sigma \mathcal{Q}^\prime(t)],\\
& J^\prime_t = {\rm div} \bm \theta(t),
\end{aligned}
\end{equation*}
we obtain
\begin{equation*}
\mathcal{A}^\prime(0) \bm \tau: \bm \sigma=-\left[\left({\rm D}\bm \tau {\rm D}\bm \theta(0): {\rm D}\bm \sigma)+ ({\rm D}\bm \tau : {\rm D}\bm \sigma {\rm D}\bm \theta(0)\right)\right].
\end{equation*}
By formally using the Correa-Seeger Theorem \cite{{Defour}}, we obtain the Eulerian derivative
\begin{equation*}
\begin{aligned}
{\rm d}_\mathcal{V}\mathcal{J}(\Omega,\vec{\theta})=&\lim_{t\rightarrow 0^+}\partial_t \tilde{\mathcal{L}}(t,\bm c,\phi,\bm s,\psi)\\
=&\sum_{i=1}^N \int_\Omega -z_i c_i {\rm div}\vec{\theta}+M(\vec{\theta})\nabla c_i \cdot \nabla s_i + z_i c_i M(\vec{\theta}) \nabla\phi\cdot \nabla s_i \dx+\int_\Omega \epsilon M(\vec{\theta})\nabla \phi\cdot\nabla\psi - \bm z^{\rm T} \bm c \psi {\rm div}\vec{\theta}\dx,
\end{aligned}
\end{equation*}
where $\bm \theta:=\bm \theta(0)$ and $M(\vec{\theta})={\rm div}\vec{\theta}{\rm Id}-{\rm D}\vec{\theta}^{\rm T}-{\rm D}\vec{\theta}$. After matrix and vector rearrangements, the Eulerian derivative in domain expression is given by the matrix formulation (\ref{dJVol}).
\end{proof}

We next derive the shape derivative of $c_i$ and $\phi$ that are characterized by the following Lemma. 
\begin{Lemma}\label{shapeDerLemma}
Suppose that the boundary of $\Omega$ is of the class $C^2$ or a convex { polygon}. Let {  $(c_i,\phi)\in H^1_{\Gamma_{\rm in},i}(\Omega)\times H^1_{\Gamma_{\rm 2},0}(\Omega)$} be the solution of the PNP system (\ref{PNP}). Then the shape derivative $(c_i^\prime,\phi^\prime)$ satisfies the following problem 
\begin{equation}\label{shapeDerState}
\left\{
\begin{aligned}
&-\Delta c_i^\prime - \nabla\cdot (z_i c_i^\prime \nabla \phi+ z_i c_i \nabla \phi^\prime)=0,\ i=1,2,\cdots,N,\qquad && {\rm in}\ \Omega,\\
&-\epsilon\Delta \phi^\prime = \bm z^{\rm T} \bm c^\prime, && {\rm in}\ \Omega,\\
&(\nabla c_i^\prime + z_i c_i^\prime \nabla \phi+ z_i c_i \nabla \phi^\prime)\cdot \bm n = {\rm div}_\Gamma \left[{\theta}_n (\nabla c_i + z_i c_i \nabla \phi)\right],\ i=1,2,\cdots,N, &&{\rm on}\ \Gamma_1\cup\Gamma_2, \\
& \bm c^\prime = \frac{\partial(\bm c^{\infty}- \bm c)}{\partial \bm n}\theta_n, &&{\rm on}\ \Gamma_{\rm in},\\
&\epsilon \frac{\partial \phi^{\prime}}{\partial \bm n}= \bm z^{\rm T} \bm c\theta_n +\epsilon{\rm div}_{\Gamma}(\theta_n \nabla \phi), &&{\rm on}\ \Gamma_{\rm in}\cup\Gamma_1,\\
&{  \phi^\prime =- \frac{\partial \phi}{\partial \bm n} {\theta}_{n}}, \qquad &&{\rm on}\ \Gamma_2,
\end{aligned}\right.
\end{equation}
where $\bm c^\prime:=[c_1^\prime, c_2^\prime,\cdots,c_N^\prime]$ is the shape derivative of the vector function $\bm c$ and ${\theta}_n:=\bm\theta\cdot \bm n$.
\end{Lemma}
\begin{proof}
The weak formulation of the Nernst--Planck system in (\ref{PNP}) with boundary conditions on the perturbed domain $\Omega_t$ is to seek $c_{i,t} \in H^1_{\Gamma_{\rm in}, i}(\Omega_t)$ and { $\phi_t \in H^1_{\Gamma_2, 0}(\Omega_t)$} such that
\begin{equation*}
    \int_{\Omega_t} (\nabla c_{i,t} + z_i c_{i,t} \nabla \phi_t)\cdot \nabla v \dx = 0 \qquad \forall v \in H^1_{\Gamma_{\rm in}, 0}(\Omega_t).
\end{equation*}
For a sufficiently smooth function $F: [0,\tau_0)\times \mathbb{R}^d \rightarrow \mathbb{R}$, it follows from the Hadamard's formula that
\begin{equation*}
\frac{{\rm d}}{{\rm d}t}\int_{\Omega_t} F(t,\bm x) \dx \bigg|_{t =0} = \int_{\partial\Omega} F(0,\bm x)\bm \theta(0)\cdot\bm n\ds + \int_\Omega \frac{\partial F}{\partial t}(0,\bm x)\dx.
\end{equation*}
Then the shape derivative of the concentration function satisfies 
\begin{equation}\label{ss1}
    \int_\Omega (\nabla c_i^\prime + z_i c_i^\prime \nabla \phi+ z_i c_i \nabla \phi^\prime)\cdot \nabla v \dx + \int_{\partial\Omega} (\nabla c_i + z_i c_i \nabla \phi)\cdot \nabla v {\theta}_n \ds= 0\quad \forall v \in H^1_{\Gamma_{\rm in}, 0}(\Omega_t).
\end{equation}
Using the integration by parts yields that
\begin{equation*}
\begin{aligned}
0=&\int_\Omega -\Delta c_i^\prime v - \nabla\cdot (z_i c_i^\prime \nabla \phi+ z_i c_i \nabla \phi^\prime)v \dx+ \int_{\partial\Omega} \frac{\partial c_i^\prime}{\partial \bm n}v +(z_i c_i^\prime \nabla \phi+ z_i c_i \nabla \phi^\prime)\cdot \bm n v \ds\\
&+\int_{\partial\Omega} (\nabla c_i + z_i c_i \nabla \phi)\cdot \nabla v {\theta}_n\ds\\
=&\int_\Omega -\Delta c_i^\prime v - \nabla\cdot (z_i c_i^\prime \nabla \phi+ z_i c_i \nabla \phi^\prime)v \dx \\
&+\int_{\Gamma_1 \cup \Gamma_2}\frac{\partial c_i^\prime}{\partial \bm n}v +(z_i c_i^\prime \nabla \phi+ z_i c_i \nabla \phi^\prime)\cdot \bm n v +(\nabla c_i + z_i c_i \nabla \phi)\cdot \nabla v {\theta}_n \ds,
\end{aligned}
\end{equation*}
where $\theta_n=0$ and $v=0$ on $\Gamma_{\rm in}$ has been used. Taking the test function $v$ with compact support in $\Omega$ implies that
\begin{equation*}
    -\Delta c_i^\prime - \nabla\cdot (z_i c_i^\prime \nabla \phi+ z_i c_i \nabla \phi^\prime)=0\quad {\rm in}\ \Omega.
\end{equation*}
Taking the test function with a homogeneous Neumann condition $\frac{\partial v}{\partial \bm n}=0$ on $\Gamma_1\cup\Gamma_2$, we obtain 
\begin{equation*}
\int_{\Gamma_1 \cup \Gamma_2} \frac{\partial c_i^\prime}{\partial \bm n}v +(z_i c_i^\prime \nabla \phi+ z_i c_i \nabla \phi^\prime)\cdot \bm n v \ds+\int_{\Gamma_1 \cup \Gamma_2} (\nabla c_i + z_i c_i \nabla \phi)\cdot \nabla_\Gamma v {\theta}_n\ds = 0.
\end{equation*}
By the density property and tangential Green’s formula, we derive the following boundary condition
\begin{equation*}
(\nabla c_i^\prime + z_i c_i^\prime \nabla \phi+ z_i c_i \nabla \phi^\prime)\cdot \bm n = {\rm div}_\Gamma [{\theta}_n (\nabla c_i + z_i c_i \nabla \phi)]\quad {\rm on}\ \Gamma_1 \cup \Gamma_2.
\end{equation*}
 The material derivative of $c_i$ on $\Gamma_{\rm in}$ satisfies that $$\dot{c}_i=\dot{(c_i^\infty)}= \nabla c_i^\infty \cdot \bm \theta$$ 
 and
 \begin{equation*}
     \nabla_{\Gamma}(c_i - c_i^{\infty}) =\bm{0},
 \end{equation*}
implying $$c_i^\prime = \dot{c_i} - \nabla c_i \cdot \bm \theta = (\nabla c_i^\infty-\nabla c_i) \cdot \bm \theta=\frac{\partial(c_i^{\infty}-c_i)}{\partial \bm n}\theta_n.$$ 
Similarly, the weak formulation of the Poisson equation in (\ref{PNP}) for $\phi_t $ on the perturbed domain reads
\begin{equation*}
\int_{\Omega_t} \epsilon\nabla \phi_t \cdot \nabla v \dx = \int_{\Omega_t} \sum_{i=1}^N z_i c_{i, t} v \dx\quad \forall v \in H^1_{\Gamma_2,0}(\Omega_t).
\end{equation*}
Hence, utilizing the Hadamard's formula yields that
\begin{equation*}
\int_\Omega \epsilon\nabla \phi^\prime \cdot \nabla v \dx+ \int_{\partial\Omega}\epsilon \nabla \phi \cdot \nabla v \theta_n \ds  = \int_\Omega \bm z^{\rm T} \bm c^\prime v \dx+ \int_{\partial\Omega}\bm z^{\rm T} \bm c v\theta_n \ds.
\end{equation*}
Taking the test function $v$ with compact support in $\Omega$, then by density property and integration by parts, we obtain the shape derivative of electric potential satisfying
\begin{equation*}
-\epsilon\Delta \phi^\prime = \bm z^{\rm T} \bm c^\prime,\qquad x\in \Omega.
\end{equation*}
Taking $\frac{\partial v}{\partial \bm n} = 0$ on $\partial\Omega$, then we obtain by the tangential Green's formula that
\begin{equation}
\begin{aligned}
\int_{\partial\Omega} \epsilon \frac{\partial \phi^{\prime}}{\partial \bm n} v \ds &= \int_{\partial\Omega}\bm z^{\rm T} \bm cv\theta_n-\epsilon \nabla \phi \cdot \nabla_{\Gamma} v \theta_n  \ds \\
&=\int_{\partial\Omega}\left[\bm z^{\rm T} \bm c\theta_n +{\rm div}_{\Gamma}(\epsilon\theta_n \nabla \phi)  \right]v \ds,
\end{aligned}
\end{equation}
yielding the boundary condition on $\Gamma_{\rm in} \cup \Gamma_1$. {  The material and tangential derivatives on $\Gamma_2$ give that $\dot{\phi} = 0$ and $\nabla_\Gamma \phi = 0$. Therefore, the boundary condition for the shape derivative of electric potential is given by
\begin{equation*}
    \phi^\prime = \dot{\phi} - \nabla \phi \cdot \vec{\theta} =- \frac{\partial \phi }{\partial \bm n} {\theta}_n\quad {\rm  on}\ \Gamma_2.
\end{equation*}}
This completes the proof of the system (\ref{shapeDerState}) for the Eulerian derivative of states w.r.t. the deformation field.
\end{proof}

Next, we shall deduce the Eulerian derivative in the boundary expression, which requires the boundary to be of the $C^2$ class.  
\begin{Theorem}\label{dJBouThm}
Let $\Omega$ be of the class $C^2$ or a convex { polygon}. Suppose that $(\bm c,\phi)$ and $(\bm s,\psi)$ are the solutions of the PNP system (\ref{PNP}) and the adjoint problem (\ref{adjPro}), respectively. Then the Eulerian derivative of the shape functional { (\ref{CostFun})} reads
{  
\begin{equation}\label{dJBou}
    {\rm d}_\mathcal{S}\mathcal{J}(\Omega,\vec{\theta})=\int_{\Gamma_2} \bigg[\sum_{i=1}^N {j}(c_i)-\big({\rm D}_\Gamma \bm c + \bm Z \bm C {\rm D}_\Gamma \Phi\big):{\rm D}_\Gamma \bm s-\epsilon\frac{\partial \phi}{\partial \bm n}\frac{\partial \psi}{\partial\bm n} \bigg]{\theta}_{n} \ds,
\end{equation}}
where $\bm Z$ and $\bm C$ are defined in \eqref{ZCPHI} and ${\rm D}_\Gamma \bm w:= [ \nabla_{\Gamma} w_1, \nabla_{\Gamma} w_2,\cdots, \nabla_{\Gamma} w_N]$ for $\bm w= [w_1, w_2,\cdots, w_N]$.
\end{Theorem}
\begin{proof}
{  We formally present the derivation of Eulerian derivative in a boundary formulation below.} By Lemma \ref{shapeDerLemma}, the weak form of (\ref{ss1}) with test functions $(\bm s,\psi)\in \textbf{H}^1_{\Gamma_{\rm in},0}(\Omega)\times H^1_{\Gamma_2,0}(\Omega)$ solution of \eqref{adjPro} gives
\begin{equation}\label{SenAnaS1}
    \int_\Omega (\nabla c_i^\prime + z_i c_i^\prime \nabla \phi+ z_i c_i \nabla \phi^\prime)\cdot \nabla s_i \dx+ \int_{\partial\Omega} (\nabla c_i + z_i c_i \nabla \phi)\cdot \nabla s_i {\theta}_n \ds=0
\end{equation}
for each $i=1,2,\cdots,N$. Then, multiplying $\psi$ on both sides of the second equation in \eqref{shapeDerState} and taking the integral on $\Omega$ yield
\begin{equation}\label{dJB1}
\begin{aligned}
    \int_\Omega \bm z^{\rm T} \bm c^{\prime} \psi \dx &= \int_\Omega -\epsilon \Delta \phi^\prime \psi \dx\\
    & = \int_\Omega \epsilon \nabla \phi^\prime \cdot \nabla \psi \dx  - \int_{\Gamma_2} \epsilon \frac{\partial \phi^\prime}{\partial \bm n} \psi \ds- \int_{\Gamma_{\rm in}\cup \Gamma_1} \bm z^{\rm T} \bm c\theta_n\psi +\epsilon{\rm div}_{\Gamma}(\theta_n \nabla \phi) \psi \ds\\
    & = \int_\Omega \epsilon \nabla \phi^\prime \cdot \nabla \psi \dx,
    \end{aligned}
\end{equation}
where the boundary terms vanish owing to $\psi =0$ on $\Gamma_2$ and $\theta_n =0$ on $\Gamma_{\rm in} \cup \Gamma_1$, and the boundary condition in \eqref{shapeDerState} has been used as well. Furthermore, by Lemma \ref{adjLemma}, multiplying $(c_i^\prime, \phi^\prime)$ on both sides of \eqref{adjPro} and taking the integral on $\Omega$ yield 
\begin{equation}\label{SenAnaSE}\left\{
\begin{aligned}
& \int_\Omega -\Delta s_i c_i^\prime + z_i\nabla \phi\cdot \nabla s_i  c_i^\prime + z_i\psi  c_i^\prime  - j^\prime(c_i) c_i^\prime \dx =0,\\
&\int_\Omega -\epsilon\Delta \psi   \phi^\prime \dx =-\int_\Omega \sum_{i=1}^N \nabla\cdot (z_i c_i\nabla s_i) \phi^\prime \dx.
\end{aligned}\right.
\end{equation}
For the first equality in \eqref{SenAnaSE}, we have
\begin{equation}
\begin{aligned}
&\int_\Omega -\Delta s_i c_i^\prime + z_i\nabla \phi\cdot \nabla s_i  c_i^\prime + z_i\psi  c_i^\prime  - j^\prime(c_i) c_i^\prime \dx\\
&\qquad=\int_\Omega \nabla s_i\cdot \nabla c_i^\prime + z_i\nabla \phi\cdot \nabla s_i  c_i^\prime + z_i\psi  c_i^\prime  - j^\prime(c_i) c_i^\prime \dx- \int_{\partial\Omega} \frac{\partial s_i}{\partial \bm n} c_i^\prime \ds.
\end{aligned}
\end{equation}
More specifically, 
\begin{equation*}
    \int_{\partial\Omega} \frac{\partial s_i}{\partial \bm n} c_i^\prime \ds = \int_{\Gamma_{\rm in}\cup\Gamma_1 \cup \Gamma_2} \frac{\partial s_i}{\partial \bm n} c_i^\prime \ds,
\end{equation*}
where the integral on $\Gamma_1 \cup \Gamma_2$
vanishes due to $\frac{\partial s_i}{\partial \bm n}=0$ on $\Gamma_1 \cup \Gamma_2$, and the integral on $\Gamma_{\rm in}$ vanishes as well due to the fact that $$\int_{\Gamma_{\rm in}} \frac{\partial s_i}{\partial \bm n} c_i^\prime \ds = \int_{\Gamma_{\rm in}} \frac{\partial s_i}{\partial \bm n} (\nabla c_i-\nabla c_i^\infty) \cdot \bm \theta\ds$$ with $\bm \theta =0 $ on $\Gamma_{\rm in}$. By the Hadamard's formula and the weak formulation (\ref{SenAnaSE}), we have
\begin{equation}\label{ss3}
\begin{aligned}
{\rm d}_\mathcal{S}\mathcal{J}(\Omega,\vec{\theta}) &= \sum_{i=1}^N \int_\Omega j^\prime(c_i) c_i^\prime \dx + \sum_{i=1}^N \int_{\partial\Omega} {j}(c_i) {\theta}_{n} \ds \\
& =\sum_{i=1}^N \int_\Omega \nabla s_i\cdot \nabla c_i^\prime + z_i\nabla \phi\cdot \nabla s_i  c_i^\prime + z_i\psi  c_i^\prime \dx+ \sum_{i=1}^N  \int_{\partial\Omega} {j}(c_i) {\theta}_{n} \ds\\
& =\sum_{i=1}^N \int_\Omega \nabla s_i\cdot \nabla c_i^\prime + z_i\nabla \phi\cdot \nabla s_i  c_i^\prime\dx +\int_\Omega \epsilon \nabla \phi^\prime\cdot \nabla \psi \dx+ \sum_{i=1}^N \int_{\Gamma_2} {j}(c_i){\theta}_{n} \ds,
\end{aligned}
\end{equation}
where (\ref{dJB1}) and $\theta_n=0$ on $\Gamma_{\rm in}\cup \Gamma_1$ have been used for the last equality. It follows by integration by parts that{  
\begin{equation}\label{ss2}
\begin{aligned}
\int_\Omega \epsilon \nabla \phi^\prime\cdot \nabla \psi \dx  =& -\int_\Omega \epsilon \Delta \psi \phi^\prime \dx + \int_{\partial\Omega}\epsilon\frac{\partial \psi}{\partial \bm n}\phi^\prime \ds \\
=&\int_{\partial\Omega}\epsilon\frac{\partial\psi}{\partial\bm n}\phi^\prime\ds - \int_\Omega \sum_{i=1}^N \nabla\cdot (z_i c_i\nabla s_i) \phi^\prime \dx \\
=&-\int_{\Gamma_2}\epsilon\frac{\partial\psi}{\partial\bm n}\frac{\partial \phi}{\partial\bm n}{\theta}_{n}\ds + \int_\Omega \sum_{i=1}^N  (z_i c_i\nabla s_i)\cdot \nabla \phi^\prime \dx\\
&+\int_{\Gamma_1\cup \Gamma_{\rm in}}\epsilon\frac{\partial\psi}{\partial\bm n}\phi^\prime-\sum_{i=1}^N\bigg(z_i c_i \frac{\partial s_i}{\partial \bm n}\bigg) \phi^\prime \ds-\int_{\Gamma_2} \sum_{i=1}^N\bigg(z_i c_i \frac{\partial s_i}{\partial \bm n}\bigg) \phi^\prime \ds,
\end{aligned}
\end{equation}}
where the last two terms in the last equality vanish due to the boundary conditions 
$$\epsilon\frac{\partial\psi}{\partial\bm n}-\sum_{i=1}^N z_i c_i \frac{\partial s_i}{\partial \bm n}=0 \ {\rm on}\ \Gamma_1\cup\Gamma_{\rm in}\quad {\rm and}\quad \frac{\partial s_i}{\partial \bm n}=0\ {\rm on}\ \Gamma_2.$$ Consequently, we further obtain by combining with \eqref{ss2} and \eqref{ss3} that{  
\begin{equation*}
\begin{aligned}
{\rm d}_\mathcal{S}\mathcal{J}(\Omega,\vec{\theta}) &=\sum_{i=1}^N \int_\Omega \nabla s_i\cdot \nabla c_i^\prime + z_i\nabla \phi\cdot \nabla s_i  c_i^\prime+(z_i c_i\nabla s_i)\cdot \nabla \phi^\prime\dx+ \int_{\Gamma_2} \sum_{i=1}^N {j}(c_i){\theta}_{n} -\epsilon\frac{\partial\psi}{\partial\bm n}\frac{\partial\phi}{\partial\bm n}{\theta}_{n}\ds\\
&=-\sum_{i=1}^N\int_{\partial\Omega} (\nabla c_i + z_i c_i \nabla \phi)\cdot \nabla s_i {\theta}_n \ds+\int_{\Gamma_2} \sum_{i=1}^N {j}(c_i){\theta}_{n} -\epsilon\frac{\partial\psi}{\partial\bm n}\frac{\partial\phi}{\partial\bm n}{\theta}_{n}\ds\\
&=\int_{\Gamma_2} -\sum_{i=1}^N (\nabla_{\Gamma} c_i + z_i c_i \nabla_{\Gamma} \phi)\cdot \nabla_{\Gamma} s_i {\theta}_n +\sum_{i=1}^N {j}(c_i){\theta}_{n} -\epsilon\frac{\partial\psi}{\partial\bm n}\frac{\partial\phi}{\partial\bm n}{\theta}_{n}\ds,
\end{aligned}
\end{equation*}}
where we have utilized $\frac{\partial c_i}{\partial \bm n} + z_ic_i \frac{\partial\phi}{\partial \bm n} = 0$ on $\Gamma_2$, $\frac{\partial s_i}{\partial \bm n}=0$ on $\Gamma_2$, and $\theta_n=0$ on $\Gamma_{\rm in}\cup\Gamma_1$. The proof for the Eulerian derivative in the boundary formula (\ref{dJBou}) is completed after some matrix and vector calculations.
\end{proof}
{  
\begin{remark}
If the assumptions in \cite[Proposition 1]{LaurainJMPA} hold, one can transform the Eulerian derivative of the domain expression into a tensor formulation:
\begin{equation}
{\rm d}_\mathcal{V}\mathcal{J}(\Omega,\vec{\theta})=\int_\Omega \mathcal{S}_1 :\md \bm \theta \dx,
\end{equation}
where 
\begin{equation}
\begin{aligned}
\mathcal{S}_1= &\mathbb{I}\sum_{i=1}^N (-z_i c_i + \nabla c_i \cdot \nabla s_i+z_i c_i \nabla\phi \cdot \nabla s_i)+\mathbb{I}[\epsilon \nabla \phi\cdot \nabla\psi- (\sum_{i=1}^N z_i c_i)\psi]\\
&-\sum_{i=1}^N \nabla c_i \otimes \nabla s_i-\sum_{i=1}^N\nabla s_i \otimes \nabla c_i -\sum_{i=1}^N z_i c_i \nabla\phi\otimes \nabla s_i -\sum_{i=1}^N z_i c_i  \nabla s_i\otimes \nabla\phi\\
&- \epsilon \nabla\phi\otimes \nabla \psi - \epsilon \nabla\psi\otimes \nabla \phi.
\end{aligned}
\end{equation}
Since $\Omega$ is of $C^2$ class or a convex polygon, the Eulerian derivative in boundary form can be directly obtained by using the fact $\psi = 0$ on $\Gamma_2$:
\begin{equation}
\begin{aligned}
{\rm d}_\mathcal{S}\mathcal{J}(\Omega,\vec{\theta}) =& \int_{\partial\Omega} (\mathcal{S}_1 \bm n\cdot \bm n)\theta_n\ds\\
=&\int_{\partial\Omega} \left[\sum_{i=1}^N j(c_i)- (\sum_{i=1}^N z_i c_i)\psi  -\sum_{i=1}^N(\nabla c_i + z_i c_i \nabla \phi)\cdot \nabla s_i- \epsilon \nabla\phi\cdot \nabla\psi\right] \theta_n\ds\\
=&\int_{\Gamma_2} \left[\sum_{i=1}^N j(c_i)  -\sum_{i=1}^N(\nabla_\Gamma c_i + z_i c_i \nabla_\Gamma \phi)\cdot \nabla_\Gamma s_i- \epsilon \frac{\partial\phi}{\partial\bm n} \frac{\partial\psi}{\partial\bm n}\right] \theta_n\ds,
\end{aligned}
\end{equation}
which is identical to the result in Theorem \ref{dJBouThm}.
\end{remark}
}

\section{Optimization algorithm}
In this section, we propose a numerical shape gradient algorithm to address the shape optimization problem \eqref{cost}. The volume constrained optimization model (\ref{cost}) { is} transformed into
an unconstrained optimization model by using the augmented Lagrangian method \cite{Schulz2016}. 
We introduce an augmented Lagrangian 
\begin{equation}\label{AugLagrangian}
 \mathbb{L}(\Omega):=\mathcal{J}(\Omega,\bm c(\Omega)) + l(\mathcal{P}_1(\Omega)-\mathcal{C}_1)+\frac{1}{2}\beta(\mathcal{P}_1(\Omega)-\mathcal{C}_1)^2,
\end{equation}
where $l>0$ denotes a Lagrange multiplier and $\beta>0$ is a penalty parameter. The Eulerian derivative of $\mathcal{P}_1(\Omega)$ at $\Omega$ in the direction $\bm \theta$ is given by
\begin{equation*}
\mathcal{P}_1^\prime(\Omega;\bm \theta):=\lim_{t\rightarrow 0^+}\frac{\mathcal{P}_1(\Omega_t)-\mathcal{P}_1(\Omega)}{t}=\int_{\Omega} {\rm div}\bm\theta\dx,
\end{equation*}
{ By combining Theorem \ref{dJVolThm} and Theorem \ref{dJBouThm}, we derive the Eulerian derivatives of the augmented Lagrangian \eqref{AugLagrangian} in both distributed and boundary forms}
\begin{equation*}
\begin{aligned}
{\rm d}_\mathcal{V}\mathbb{L}(\Omega,\vec{\theta})={\rm d}_\mathcal{V}\mathcal{J}(\Omega;\bm \theta) +\int_\Omega \big[l+\beta(\mathcal{P}_1(\Omega)-\mathcal{C}_1)\big] {\rm div}\vec{\theta}\dx
\end{aligned}
\end{equation*}
and
\begin{equation*}
{\rm d}_\mathcal{S}\mathbb{L}(\Omega;\vec{\theta})={\rm d}_\mathcal{S}\mathcal{J}(\Omega;\bm \theta)+\int_{\partial\Omega}  \big[ l+\beta(\mathcal{P}_1(\Omega)-\mathcal{C}_1) \big]{\theta}_{n} \ds,
\end{equation*}
respectively. The initial guess for the Lagrange multiplier (denoted by $l_0$) is approximated from the first-order necessary optimality { condition by considering} the deformation field with unit outward normal $\bm \theta = \bm n$ on $\partial\Omega$, { which yields}
\begin{equation*}
l_0 =- \frac{{\rm d}_\mathcal{S}\mathcal{J}( \Omega; \bm n)}{\vert \Gamma_2\vert}.
\end{equation*}
{ To enforce the volume constraint throughout the optimization process, we employ a Uzawa-type update scheme for the Lagrange multiplier:}
\begin{equation}\label{Uzawa}
l \leftarrow l + \beta (\mathcal{P}_1(\Omega)-\mathcal{C}_1).
\end{equation}

Denote the space of the deformation field as $\mathcal{X}:=\{ w \in {H}^1(\Omega)| \  w=0\ {\rm on}\ \Gamma_{\rm in}\cup \Gamma_1 \}$. The descent direction can be obtained by solving the \emph{$H^1$ shape gradient descent flow} (see \cite{DFO}) with Eulerian derivative on the right-hand side: find $\vec{\zeta} \in \mathcal{X}^d$ such that
\begin{equation}\label{H1direction}
\int_{\Omega}(\epsilon_0{\rm D}\vec{\zeta}:{\rm D}\vec{\theta}+\vec{\zeta}\cdot \vec{\theta}) \dx =-{\rm d}\mathbb{L}(\Omega;{\vec{\theta}})\quad\forall\ \vec{\theta}\in \mathcal{X}^d,
\end{equation}
where ${\rm d}\mathbb{L}(\Omega;{\vec{\theta}})$ corresponds to the distributed (or boundary type of) Eulerian derivative and $\epsilon_0>0$ is a diffusion parameter. During the grid motion in the shape gradient algorithm, the mesh may become low quality even if the initial mesh is quasi-uniform. One strategy for improving mesh quality is to use uniform remeshing. The remeshing technique by the Delaunay triangulation could be costly for very fine meshes. For 2d cases, we can use a special moving mesh method by virtue of conformal transformations (CT) \cite{CR} to preserve the angles of triangles during the optimization process. More precisely, introduce $$\mathcal{B}:=\left(\begin{array}{cc}-\partial_{x} & \partial_{y} \\ \partial_{y} & \partial_{x}\end{array}\right)$$ and the symmetric part of ${\rm D}\vu$ $$\operatorname{sym}(\mathrm{D} \mathbf{u}):=\frac{1}{2}\left(\mathrm{D} \mathbf{u}+\mathrm{D} \mathbf{u}^{{\rm T}}\right).$$ { We} consider a CT-H(sym) gradient flow { defined as follows}: find $\vec{\zeta}\in \mathcal{X}^2$ such that
\begin{equation}\label{CRflow}
\int_{\Omega}\bigg(\frac{1}{\alpha}\mathcal{B}\vec{\zeta}\cdot\mathcal{B}\bm \theta+{\rm sym}({\rm D}\vec{\zeta}):{\rm sym}({\rm D}\bm \theta)+\vec{\zeta} \cdot \bm\theta \bigg)\dx  = 
- {\rm d} \mathbb{L}(\Omega;\bm \theta) \quad \forall\ \bm\theta \in  \mathcal{X}^2,
\end{equation}
where $\alpha>0$ is a parameter to control the effect of CT. With such a strategy, we are able to seek
the optimal domain of interest while maintaining the quasi-uniformity and regularity of the mesh during morphology evolution.
{\color{purple}
\begin{remark}
The gradient flow dynamical system may exhibit divergence in infinite-dimensional spaces. For a model problem addressing convergent gradient flow systems,  the readers are referred to \cite{SokoJGA2023}. The approach involves regularizing the objective functional using geometric energy functionals, such as the Willmore functional. Proper regularization of the cost functional can ensure convergence at the continuous level for gradient flow systems. Additionally, when a perimeter penalization is incorporated into shape optimization problems, positive results regarding the existence of optimal shapes have been established. In the subsequent numerical sections, we incorporate perimeter regularization to ensure that the evolving shapes remain within the class of Caccioppoli sets, thereby maintaining well-posedness and stability of the optimization process.
\end{remark}}

\section{Numerical schemes}

\subsection{Finite-element discretization of state and adjoint}
We consider a family of quasi-uniform { triangulations} $\{ \mathcal{T} _h\}_{h >0}$ of $\Omega$ into triangles { or} tetrahedrons satisfying $\overline{\Omega_h} = \bigcup _{K \in\mathcal{T} _h} \overline{K}$, where the mesh size $ h:= \max_{K\in\mathcal{T} _h} h_K$  with  $h_K$ being the diameter of any $K \in \mathcal{T}_h$. { Assume} that the discrete domain $\Omega_h$
is a polygon. { We introduce} a finite-dimensional subspace $W_h\subset {H}^1(\Omega_h)$, where the space $W_h=\{q_h\in  C^0(\overline{\Omega_h}) \, \big\lvert \,q_h|_K\in \mathbb{P}_1\ \forall K\in\mathcal{T}_h\}$ consists of continuous piece-wise linear polynomials. 
A triangulation is strongly acute \cite{Prohl2009} if the sum of opposite angles to each common side { or} face of adjacent triangles is less than $\pi-\theta_0$, with $\theta_0>0$ independent of the mesh size $h$.  

It is numerically challenging to obtain a { reliable} numerical solution of the PNP system (\ref{PNP}) when convection dominates over diffusion. We employ the so-called Gummel \cite{Gummel1964} iteration, a nonlinear block iterative algorithm that splits the PNP system (\ref{PNP}) into a Poisson--Boltzmann-type equation for the electric potential and a self-adjoint continuity system for ionic concentrations. More specifically, we introduce the Slotboom transformation \cite{FBrezzi}:
\begin{equation}\label{SoltTrans}
\rho_{i} = c_{i} \exp{(z_i \phi)}.
\end{equation}
In finite-element approximation, we consider both potential and concentration functions discretized by piece-wise linear polynomials. { We define the discrete function space} $W_{g,h}:=\{q_h\in W_h\big \vert q_h = g\ {\rm on}\ \Gamma_2 \}$ for the discrete potential $\phi_h$, $\textbf{W}_h:= (W_h)^N$, and  $\textbf{W}_{\bm \rho,h}:=\{\bm \rho_h\in \textbf{W}_h \big\vert  \rho_{i,h} =  c_i^\infty\exp{(z_i \phi_h)} \ {\rm on}\ \Gamma_{\rm in} \}$ the vector set for Slotboom variables, where $\bm \rho_h = [\rho_{1,h},\rho_{2,h},\cdots,\rho_{N,h}]^{\rm T}$. { We} introduce the discrete function spaces $\textbf{W}_{0,h}:=\{ \bm w_h\in\textbf{W}_{h}| \bm w_h=\bm 0\ {\rm on}\ \Gamma_{\rm in} \}$ and $W_{0,h}:=\{ w_h\in W_h| w_h=0\ {\rm on}\ \Gamma_{2} \}$ for the discrete adjoint variables. Then the discrete PNP system { is formulated as}: find $(\bm \rho_h,\phi_h)\in \textbf{W}_{\bm \rho,h} \times W_{g,h}$ such that
\begin{equation}\label{PNPdis}\left\{
\begin{aligned}
&\int_{\Omega_h} \exp{(-z_i\phi_h)}\nabla \rho_{i,h}\cdot \nabla v_{i,h} \dx =0,\quad&& \forall\ \bm v_h\in \textbf{W}_{0,h},\ i=1,2,\cdots,N, \\
&\int_{\Omega_h} \epsilon \nabla \phi_h \cdot \nabla \zeta_h \dx = \int_{\Omega_h} \bigg(\sum_{i=1}^N z_i \rho_{i,h} \exp{(- z_i \phi_h)}\bigg) \zeta_h \dx,\quad && \forall\ \zeta_h\in W_{0,h},
\end{aligned}\right.
\end{equation}
where the vector test function $\bm v_h:=[v_{1,h}, v_{2,h},\cdots, v_{N,h}]^{\rm T}$.  To solve the nonlinear system efficiently, we { employ} the well-known Gummel fixed-point scheme~\cite{Gummel1964} that features good convergence properties, even with a badly chosen initial guess. It is implemented as follows:

\textbf{Gummel fixed-point scheme for the steady-state Poisson--Nernst--Planck system:}

\emph{\textbf{Step 0}: Set $\ell=0$ and the stopping tolerance $\tau>0$. Initialize $\bm \rho_h^{\ell}$}. 

\emph{\textbf{Step 1}: Solve the Poisson--Boltzmann-type equation with  for $\phi^{\ell+1}_h\in W_{g,h}$:
\begin{equation}\label{semiPoisson}
\int_{\Omega_h} \epsilon \nabla \phi_h^{\ell+1} \cdot \nabla \zeta_h \dx = \int_{\Omega_h} \bigg(\sum_{i=1}^N z_i \rho_{i,h}^\ell \exp{(- z_i \phi_h^{\ell+1})}\bigg) \zeta_h \dx\quad \forall\ \zeta_h\in W_{0,h}.
\end{equation}}

\emph{\textbf{Step 2}: Solve the decoupled continuity system with the updated $\phi^{\ell+1}_h\in W_{g,h}$ for $\bm \rho_h^{\ell+1}\in \textbf{W}_{\bm \rho,h}$:
\begin{equation}\label{disCon}
    \int_{\Omega_h} \exp{(-z_i\phi_h^{\ell+1})}\nabla \rho_{i,h}\cdot \nabla v_{i,h} \dx =0\quad \forall\ \bm v_h \in \textbf{W}_{0,h},\quad i=1,2,\cdots,N.
\end{equation}}

\emph{\textbf{Step 3}: Check the stopping criterion $$\|\phi_h^{\ell+1}-\phi_h^\ell\|_1< \tau\ \text{ and }\  \|\bm \rho_h^{\ell+1}-\bm \rho_h^\ell\|_1< \tau.$$ If true, output the solution $(\bm \rho_h^{\ell+1},\phi_h^{\ell+1})$ and stop; Otherwise, update $\ell \leftarrow \ell+1$ and go back to \textbf{Step 1}}.

The convergence { analysis} for the Gummel fixed-point scheme in continuous functional space has been provided in \cite[page 332]{Markowich1986}.
Notice that the semilinear Poisson--Boltzmann-type equation (\ref{semiPoisson}) can be solved efficiently by Newton's iterations that are able to converge locally { at} a quadratic rate.
In large-convection scenarios, the singular coefficients $\exp{(- z_i\phi)}$ may cause severe numerical instability due to { their} sharp variations across elements. To { stabilize}, the discrete form of continuity equation (\ref{disCon}) is approximated by the inverse-average techniques~\cite{Brezzi1989,averageJCP2022}
\begin{equation}\label{InvAve}
\sum_{K\in\mathcal{T}_h}\int_K \exp{(-z_i\phi_h)}\nabla \rho_{i,h}\cdot \nabla v_{i,h} \dx\approx \sum_{K\in\mathcal{T}_h} E(-z_i\phi_h)_K \int_K \nabla \rho_{i,h}\cdot \nabla v_{i,h} \dx,
\end{equation}
where 
\begin{equation*}
E(-z_i\phi_h)_K = \bigg(\frac{1}{\vert K \vert}\int_K \exp{(z_i\phi_h)}\dx \bigg)^{-1}.
\end{equation*}
If the triangulation $\mathcal{T}_h$ is of weakly acute type, the matrix associated with (\ref{InvAve}) has been shown to be an M-matrix \cite{Brezzi1989}, exhibiting good properties on conservation and non-negativity~\cite{Ding2019, Ding2020}. 

The finite element discretization of the adjoint problem (\ref{adjPro}) is { formulated as finding} $\bm s_h=[s_{1,h},s_{2,h},\cdots, s_{N,h}]^{\rm T} \in \textbf{W}_{0,h}$ and $\psi_h \in W_{0,h}$ such that for all $(\bm v_h,\zeta_h)\in \textbf{W}_{0,h} \times W_{0,h}$,
\begin{equation}\label{Adjdis}\left\{
\begin{aligned}
&\int_{\Omega_h} \nabla s_{i,h}\cdot \nabla v_{i,h} + z_i (\nabla\phi_h\cdot \nabla s_{i,h})v_{i,h}+z_i\psi_h v_{i,h}- j^\prime(c_{i,h})v_{i,h} \dx =0, \\
&\int_{\Omega_h} \epsilon \nabla \psi_h \cdot \nabla \zeta_h \dx = \int_{\Omega_h} \bigg(\sum_{i=1}^N z_i c_{i,h}\nabla s_{i,h}\bigg)\cdot \nabla \zeta_h \dx.
\end{aligned}\right.
\end{equation}
Note that the adjoint system in the discretized form (\ref{Adjdis}) can be written as a linear algebraic system. Let $\mathcal{N}_h$ { denote} the set of all nodes in $\mathcal{T}_h$ and $\{q \}_{m=1}^{|\mathcal{N}_h|}$ { denote} the elements in $\mathcal{N}_h$. Define the nodal interpolation operator $\mathcal{I}_h: C(\overline{\Omega}) \rightarrow W_h$ by $\mathcal{I}_h \omega = \sum_{q\in \mathcal{N}_h} \omega(q)\nu_q$ for  $\omega\in C(\overline\Omega)$, where $\nu_q \in W_h$ denotes the nodal basis. Suppose the adjoint state variables are represented by a linear combination of the nodal basis as $s_h = \sum_{q \in \mathcal{N}_h} s_{q} \nu_{q}$ and $\psi_h = \sum_{q\in \mathcal{N}_h} \psi_{q} \nu_{q}$. Define a column vector  $(F_i)_m = \int_{\Omega_h}j^\prime(c_{i,h})\nu_{q_m}\dx$ for $i=1, 2, \cdots, N$ and $m=1, 2, \cdots, \vert\mathcal{N}_h \vert$. Introduce a stiffness matrix $A$, a mass matrix $B$, an asymmetric matrix $C$, and a matrix $D$ that are defined by $\big(A\big)_{m,n} = \int_{\Omega_h}\nabla \nu_{q_m}\cdot \nabla \nu_{q_n}\dx $, $\big(B\big)_{m,n} = \int_{\Omega_h} \nu_{q_m} \nu_{q_n}\dx$, $\big(C\big)_{m,n} = \int_{\Omega_h}\nabla \phi_h\cdot \nabla \nu_{q_m} \nu_{q_n}\dx$, and $\big(D_i \big)_{m,n} = \int_{\Omega_h}c_{i,h}\nabla \nu_{q_m}\cdot\nabla \nu_{q_n}\dx$ for $m,n=1, 2, \cdots, \vert\mathcal{N}_h \vert$, respectively. { Using} the Galerkin orthogonality, the algebraic system of (\ref{Adjdis}) becomes finding vectors $S_i=[s_{i,1},s_{i,2},\cdots,s_{i,\vert \mathcal{N}_h\vert}]^{\rm T} \in \mathbb{R}^{\vert\mathcal{N}_h \vert}$ for $i=1, 2, \cdots, N$ and $\Psi =[\phi_1,\phi_2,\cdots,\phi_{\vert \mathcal{N}_h\vert}]^{\rm T} \in \mathbb{R}^{\vert\mathcal{N}_h \vert}$ such that
\begin{equation}\label{AdjdisAlgb}
\left[\begin{tabular}{cccc|c}
$A+z_1C$     & $\mathbf{0}$ &$\cdots$ & $\mathbf{0}$ & $z_1 B$ \\
$\mathbf{0}$ &$A+z_2 C$     &$\cdots$ & $\mathbf{0}$ & $z_2 B$ \\
$\vdots$     &$\vdots$      &$\ddots$ & $\vdots$     & $\vdots$ \\
$\mathbf{0}$ &$\mathbf{0}$  &$\cdots$ & $A+z_N C$    & $z_N B$ \\
\hline
$z_1 D_1$    & $z_2 D_2$    &$\cdots$ & $z_N D_N$    &$-\epsilon A$
\end{tabular}\right]\left(\begin{array}{l}
S_1 \\
S_2 \\
\vdots \\
S_N\\
\Psi
\end{array}\right)=\left(\begin{array}{l}
F_1\\
F_2\\
\vdots\\
F_N\\
\mathbf{0}
\end{array}\right).
\end{equation}
\subsection{Discrete shape gradient flow}
Denote by $\Omega_{h,k}$ ($k=0,1,2,\cdots$) a domain represented by the mesh at the $k$-th iteration. It is updated by 
\begin{equation}\label{ShapeEvolution}
\Omega_{h,k+1}= \Omega_{h,k}+\delta_k{\vec{\zeta}}_{k,h},
\end{equation} 
where $\delta_k>0$ is a suitable step size and ${\vec{\zeta}}_{k,h}$ is a computed discrete descent direction.
We { employ} the Galerkin $\mathbb{P}_1$ finite element method to discretize $H^1$ gradient flow (\ref{H1direction}) or the CT-H(sym) gradient flow (\ref{CRflow}). Let $\mathcal{X}_h:=\{w_h \in W_h|\ w_h=0\ {\rm on}\ \Gamma_{\rm in}\cup \Gamma_1 \}$. The discrete descent direction $\vec{\zeta}_h \in \mathcal{X}_h^d$ for $H^1$ shape gradient flow is obtained by solving 
\begin{equation}\label{H1dis}
\int_{\Omega_h}(\epsilon_0{\rm D}\vec{\zeta}_h:{\rm D}\vec{\theta}_h+\vec{\zeta}_h\cdot \vec{\theta}_h){\rm d}x=-{\rm d}\mathbb{L}(\Omega_h;{\vec{\theta}_h})\quad\forall\ \vec{\theta}_h \in \mathcal{X}_h^d,
\end{equation}
where ${\rm d}\mathbb{L}(\Omega_h;{\vec{\theta}_h})$ is the finite-element discretization of ${\rm d}\mathbb{L}(\Omega;{\vec{\theta}})$. Such a linear system with a symmetric and positive definite coefficient matrix can be efficiently solved by the preconditioned conjugate gradient method. The vector-field gradient flow \eqref{H1dis} can be decomposed into $d$ scalar systems for fast numerical computation, especially for 3d cases. That is, the vector $H^1$ shape gradient flow is equivalent to more efficient scalar type of $H^1$ shape gradient flows \cite{LiNS2grid}: find 
$\zeta_{j,h}\in \mathcal{X}_h$ $(j=1,\cdots,d)$ such that
\begin{equation}\label{ScalarFlow}
	\int_{\Omega_h}  (\epsilon_0\nabla \zeta_{j,h}\cdot\nabla\eta_h +\zeta_{j,h}\eta_h){\rm d}x =- {\rm d}\mathbb{L}(\Omega_h;\bm{\xi}^j_h) \quad \forall\ \eta_h \in \mathcal{X}_h,
\end{equation}
where $\bm{\xi}^j_h=[0,\cdots,\eta_h,\cdots,0]^{\rm T}$ with $\eta_h$ { located} at the $j$-th component. The discrete CT-H(sym) gradient flow in 2d  reads: find $\vec{\zeta}_h\in \mathcal{X}_h^2$ such that
\begin{equation}\label{CRflowDis}
\int_{\Omega_h}\bigg(\frac{1}{\alpha}\mathcal{B}\vec{\zeta}_h\cdot\mathcal{B}\bm\theta_h+{\rm sym}({\rm D}\vec{\zeta}_h):{\rm sym}({\rm D}\bm\theta_h)+\vec{\zeta}_h \cdot \bm\theta_h\bigg){\rm d}x  = 
- {\rm d} \mathbb{L}(\Omega_h;\bm\theta_h) \quad \forall \ \bm\theta_h \in  \mathcal{X}_h^2.
\end{equation}
The whole shape optimization algorithm is summarized as follows:
\begin{algorithm}[htbp]\label{alg1}
\SetAlgoLined
\caption{Shape gradient algorithm for maximizing charge storage in supercapacitors}
\KwData{Input an initial domain $\Omega_0$, 
maximum number of iteration times $N_{\rm iter}$,
stopping tolerance $\tau$, 
initial guess for Lagrange multiplier $l_0$, and  volume target $\mathcal{C}_1$.}
\While {$n \leq N_{\rm iter}$}{
\emph{Step 1}: Update $({\bm \rho}_{h},\phi_{h})$ by solving the PNP system \eqref{PNPdis} with the Gummel fix-point scheme \\
\emph{Step 2}: Update the adjoint variable $(\bm s_h,\psi_h)$ by solving the discrete adjoint problem \eqref{Adjdis} \\
\emph{Step 3}: Calculate the deformation ${\vec{\zeta}}_{h}$ via \eqref{H1dis} or \eqref{CRflowDis} and update the mesh via \eqref{ShapeEvolution}\\
\emph{Step 4}: Update the Lagrange multiplier via \eqref{Uzawa}\\
$n \leftarrow n+1$\\
}
\end{algorithm}

\section{Numerical results}
Numerical simulations are performed with the software FreeFem++ \cite{freefem} on a computer with Intel Core i7-7820x @3.60 GHz and RAM 16.0 GB. Numerical tests are performed to demonstrate the performance of our shape optimization model and the proposed numerical Algorithm \ref{alg1} in enhancing charge storage in supercapacitors.

\begin{figure}[htbp]
\centering
\begin{tikzpicture}[scale=4.25]
\draw[thick, fill=gray!30!white] (0.0,0.0) -- (1.0,0.0) -- (1.0,1.0) -- (0.0,1.0) -- cycle;
\draw (0.5,1.05) node[scale=1.0] {$\Gamma_{1}$};
\draw (0.5,0.05) node[scale=1.0] {$\Gamma_{1}$};
\draw (0.5,0.5) node[scale=1.0] {$\Omega$};
\draw (0.075,0.5) node[scale=1.0] {$\Gamma_{\rm in}$};
\draw (1.075,0.5) node[scale=1.0] {$\Gamma_{2}$};
\draw [arrows = {-Latex[width=5pt, length=5pt]}] (0.0,-0.05) -- (1.0,-0.05);
\draw [arrows = {-Latex[width=5pt, length=5pt]}] (1.0,-0.05) -- (0.0,-0.05);
\draw [arrows = {-Latex[width=5pt, length=5pt]}] (-0.05,0.0) -- (-0.05,1.0);
\draw [arrows = {-Latex[width=5pt, length=5pt]}] (-0.05,1.0) -- (-0.05,0.0);
\draw (0.5,-0.1) node[scale=1.0] {$1L$};
\draw (-0.125,0.5) node[scale=1.0] {$1L$};
\end{tikzpicture}
\quad
\begin{tikzpicture}[scale=3.0]
\draw[thick] (0.7,1.4) -- (0.0,1.4) -- (0.0,0.0) -- (0.7,0.0);
\draw[dashed, fill=gray!30!white] (0.7,1.4) -- (0.7,0.0) -- (1.2,0.0) -- (1.2,0.2) -- (0.8,0.2) -- (0.8,0.4) -- (1.2,0.4) -- (1.2,0.6) -- (0.8,0.6) -- (0.8,0.8) -- (1.2,0.8) -- (1.2,1.0) -- (0.8,1.0)--(0.8,1.2) -- (1.2,1.2) -- (1.2,1.4)-- cycle;
\draw[thick]  (0.7,0.0) -- (1.2,0.0) -- (1.2,0.2) -- (0.8,0.2) -- (0.8,0.4) -- (1.2,0.4) -- (1.2,0.6) -- (0.8,0.6) -- (0.8,0.8) -- (1.2,0.8) -- (1.2,1.0) -- (0.8,1.0)--(0.8,1.2) -- (1.2,1.2) -- (1.2,1.4) -- (0.7,1.4);
\draw (0.35,0.7) node[scale=1.0] {$\Omega_{1}$};
\draw (0.9,0.9) node[scale=1.0] {$\Omega_{2}$};
\draw [arrows = {-Latex[width=5pt, length=5pt]}] (0.0,-0.05) -- (0.7,-0.05);
\draw [arrows = {-Latex[width=5pt, length=5pt]}] (0.7,-0.05) -- (0.0,-0.05);
\draw [arrows = {-Latex[width=5pt, length=5pt]}] (-0.05,0.0) -- (-0.05,1.4);
\draw [arrows = {-Latex[width=5pt, length=5pt]}] (-0.05,1.4) -- (-0.05,0.0);
\draw (0.35,-0.15) node[scale=1.0] {$0.7L$};
\draw (-0.2,0.7) node[scale=1.0] {$1.4L$};
\draw (0.1,0.7) node[scale=1.0] {$\Gamma_{\rm in}$};
\draw (0.35,1.45) node[scale=1.0] {$\Gamma_{1}$};
\draw (0.35,0.05) node[scale=1.0] {$\Gamma_{1}$};
\draw (1.0,0.7) node[scale=1.0] {$\Gamma_{2}$};
\end{tikzpicture}
\quad
\begin{tikzpicture}[scale=3.0]
\draw[thick] (0.7,1.4) -- (0.0,1.4) -- (0.0,0.0) -- (0.7,0.0);
\draw[dashed, fill=gray!30!white] (0.7,1.4) -- (0.7,0.0) -- (1.4,0.0) -- (1.4,1.4) -- cycle;
\draw[thick] (0.7,0.0) -- (1.4,0.0) -- (1.4,1.4) -- (0.7,1.4);
\draw (0.35,0.7) node[scale=1.0] {$\Omega_{1}$};
\draw (0.8,0.7) node[scale=1.0] {$\Omega_{2}$};
\draw [arrows = {-Latex[width=5pt, length=5pt]}] (0.0,-0.05) -- (0.7,-0.05);
\draw [arrows = {-Latex[width=5pt, length=5pt]}] (0.7,-0.05) -- (0.0,-0.05);
\draw [arrows = {-Latex[width=5pt, length=5pt]}] (-0.05,0.0) -- (-0.05,1.4);
\draw [arrows = {-Latex[width=5pt, length=5pt]}] (-0.05,1.4) -- (-0.05,0.0);
\draw (0.35,-0.15) node[scale=1.0] {$0.7L$};
\draw (-0.2,0.7) node[scale=1.0] {$1.4L$};
\draw (0.1,0.7) node[scale=1.0] {$\Gamma_{\rm in}$};
\draw (0.35,1.45) node[scale=1.0] {$\Gamma_{1}$};
\draw (0.35,0.05) node[scale=1.0] {$\Gamma_{1}$};
\draw (1.3,0.7) node[scale=1.0] {$\Gamma_{2}$};
\draw (1.05,0.3) node[circle,fill=white,draw][scale=2.0]{};
\draw (1.05,0.7) node[circle,fill=white,draw][scale=2.0]{};
\draw (1.05,1.1) node[circle,fill=white,draw][scale=2.0]{};
\end{tikzpicture}
\caption{Illustration of computational domains: a square domain (left), an irregular domain (middle), and a porous domain (right).}
\label{domains2d}
\end{figure}

To explore the effectiveness of shape gradient algorithm on different topologies of the domain, we perform numerical tests on regular, irregular, and porous regions in 2d (see Fig.~\ref{domains2d}). The shape optimization is implemented { via} the shape gradient flows, $H^1$ and CT-H(sym). Let the gray-shaded sub-region be the design domain, where the total net charge storage is calculated. Set $L=1$ for all cases unless specified otherwise. To prevent the optimization domain from exceeding the range in the y-direction, we replace the computed { descent} direction by ${\zeta_{2,h}}=\mathcal{S}(y)\zeta_{2,h}(x,y)$, where 
\begin{equation*}
\mathcal{S}(y)=\left\{
\begin{aligned}
& \frac{1}{1+\exp(-M(y-y_{\rm min}))},  \quad && y < \frac{y_{\rm min}+y_{\rm max}}{2},\\
& \frac{1}{1+\exp(M(y-y_{\rm max}))},\quad && y\geq \frac{y_{\rm min}+y_{\rm max}}{2}.
\end{aligned}\right.
\end{equation*}
Here $M$ is a sufficiently large number and $y_{\rm min}$ and $y_{\rm max}$ denote lower and upper limits of the computational domain along the y-direction, respectively. Set $M=100$ for all experiments. To prevent the mesh from twisting and { distorting}, especially for an irregular domain, the perimeter regularization term $\frac{1}{2}\gamma\vert \partial\Omega\vert^2$ is added to the objective with a small parameter $0<\gamma\ll 1$.

\subsection{Experiment 1 (Square domain)} { We} consider a design domain of a regular square (see Fig. \ref{domains2d} left). Notice that we choose the dimensionless parameter $\epsilon=1.0$ in this experiment. The initial mesh (see Fig. \ref{Case1IniOpt} upper left) has $5938$ triangular elements and $3070$ nodes.

\textbf{\emph{Case 1.1.}} We first consider only one ionic species and test the Algorithm \ref{alg1} with a boundary type of shape gradient. The Dirichlet boundary data { is specified as} $c^\infty_1=1$ on $\Gamma_{\rm in}$ with valence $z=1$ and $g=-0.75$ on $\Gamma_2$. Initialize the  domain as a square with the Lebesgue measure $\vert\Omega\vert = 1$. Take the target volume $\mathcal{C}_1=1.75$, perimeter penalty coefficient $\gamma=10^{-4}$, and $\epsilon_0=0.05$. We assume that each triangle of the initial mesh is equilateral so that we compute numerically the mesh characteristic length $\sqrt{{4{\rm meas}(\Omega)}/ ({\sqrt{3}N_e})}$, which is used for regular and quasi-uniform remeshing in FremFem++ \cite{DFO}. In the upper right plot of Fig. \ref{Case1IniOpt}, the optimal mesh computed by boundary type of $H^1$ gradient flow presents two protruding corners. See Fig. \ref{Case1IniOpt} for counterion concentrations (middle left)  with contours and electric potential (middle right) corresponding to the optimized design. As expected, { we} observed that the concentration of counterions increases significantly when approaching the electrode interface $\Gamma_2$, where the electric double layers that store most of charges are formed. Meanwhile, the electric potential gets screened by the accumulated counterions in electric double layers and becomes homogeneous in the left bulk region.
\begin{figure}[htbp]
    \begin{minipage}[b]{0.5\textwidth}
		\centering
		\includegraphics[width=1.4 in]{./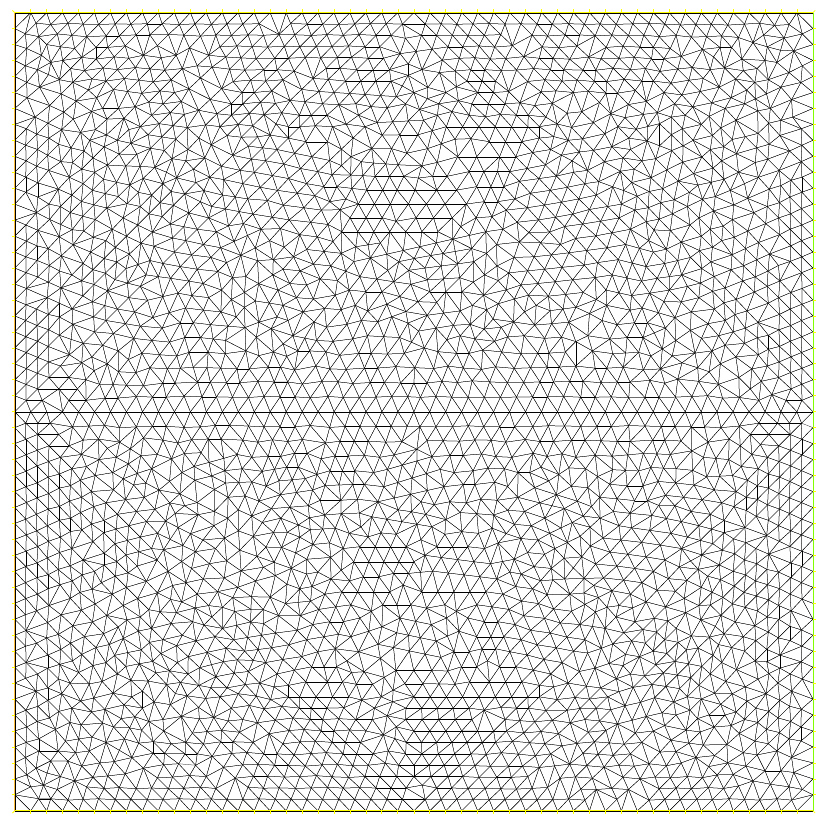}
	\end{minipage}
     \begin{minipage}[b]{0.5\textwidth}
		\centering
		\includegraphics[width=2.75 in]{./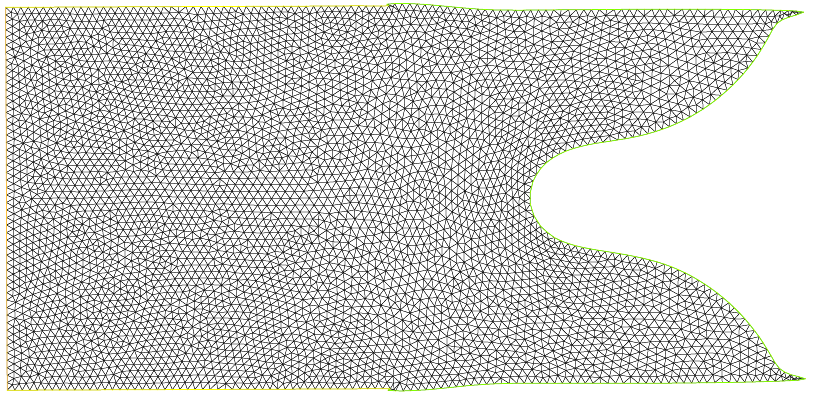}
	\end{minipage}
    \\
    \begin{minipage}[b]{0.5\textwidth}
		\centering
		\includegraphics[width=2.75 in]{./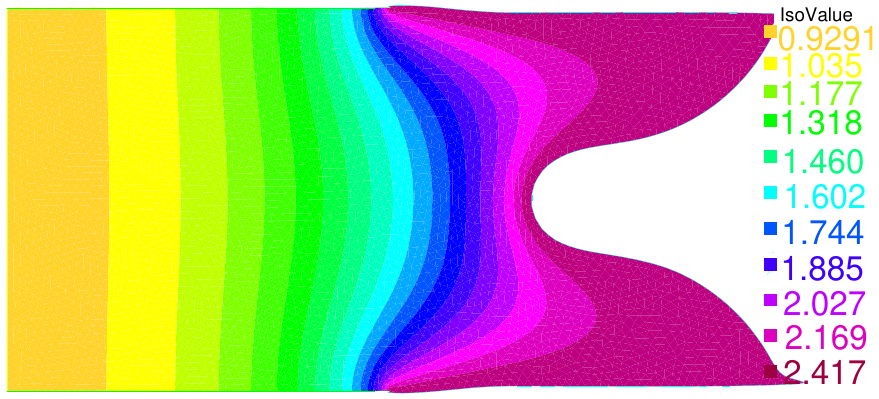}
	\end{minipage}
     \begin{minipage}[b]{0.5\textwidth}
		\centering
		\includegraphics[width=2.75 in]{./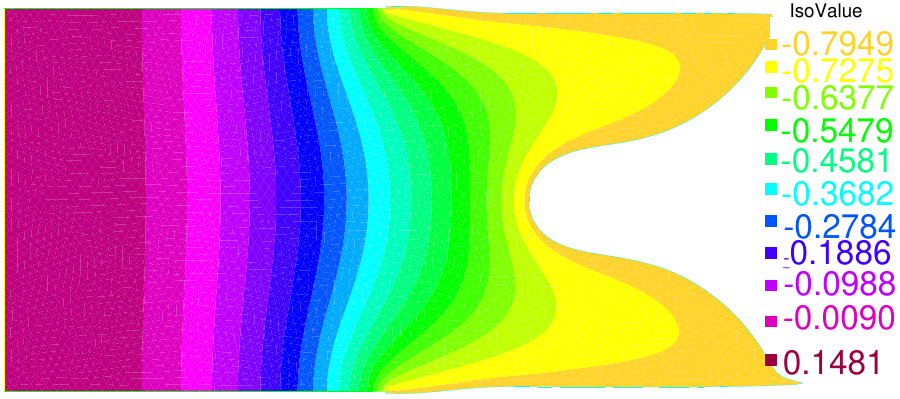}
	\end{minipage}
	\caption{Initial domain (upper left) and optimized domain with mesh (upper right), counterion concentration $c_1$ (lower left) and electric potential $\phi$ (lower right) by $H^1$ shape gradient flow for Case 1.1.}
	\label{Case1IniOpt} 
\end{figure}

\textbf{\emph{Case 1.2.}} In this case, we employ the CT-H(sym) gradient flow (\ref{CRflow}) with $\alpha=2$ using the same boundary conditions and optimization parameters as above. With preserved angles in each triangle, the optimized morphology { shown} in Fig. \ref{Case1CRBou} (upper left) is much smoother.  Middle plots of Fig. \ref{Case1CRBou} present contours of counterion concentration and electric potential computed for the optimized design. 

\textbf{\emph{Case 1.3.}} Now we test CT-H(sym) gradient flow with shape gradient in domain expression. Take the same initial morphology, and set the target volume  $\mathcal{C}_1=1.75$. The same boundary conditions and optimization parameters are used as in \emph{Case 1.1}. From the upper right plot of Fig. \ref{Case1CRBou}, the optimized domain is shown to be similar to CT-H(sym) gradient flow with boundary type of shape gradient. The results corresponding to the optimal morphology show { that} objective $\mathcal{J}=2.7559$ and volume error less than $0.01$.

From the results in \emph{Case 1.1} and \emph{Case 1.2}, it can be found that CT-H(sym) gradient flow \eqref{CRflow} gives better optimized shape with a smoother boundary. { By} comparing the results in \emph{Case 1.2} and \emph{Case 1.3}, we find that the optimized domains computed by the distributed and boundary type of Eulerian derivatives share similar morphology. From lower plots of Fig. \ref{Case1CRBou}, { we observe} that the convergence histories of the objective tend to be flat and the volume errors converge approximately to $10^{-2}$ for both gradient flows with the distributed and boundary type of shape gradients. Consequently, the following numerical experiments are performed using a CT-H(sym) gradient flow with a boundary type of shape gradient.

\begin{figure}[htbp]
    \begin{minipage}{0.5\textwidth}
		\centering
		\includegraphics[width=3.0 in]{./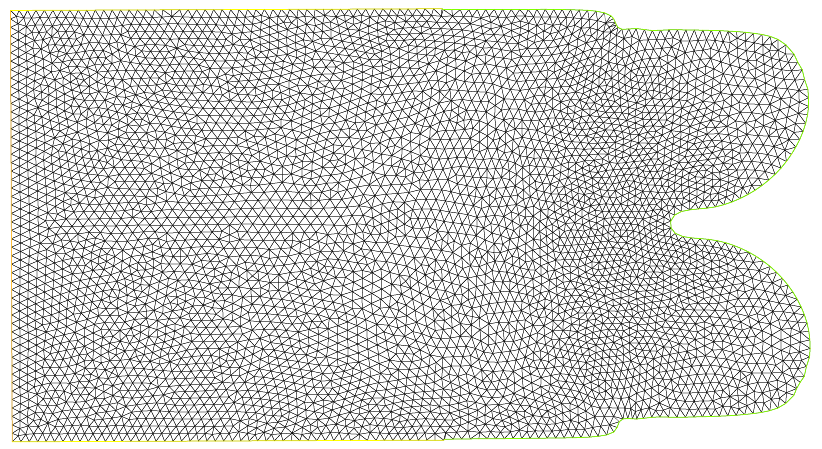}
	\end{minipage}
        \begin{minipage}{0.5\textwidth}
		\centering
		\includegraphics[width=3.0 in]{./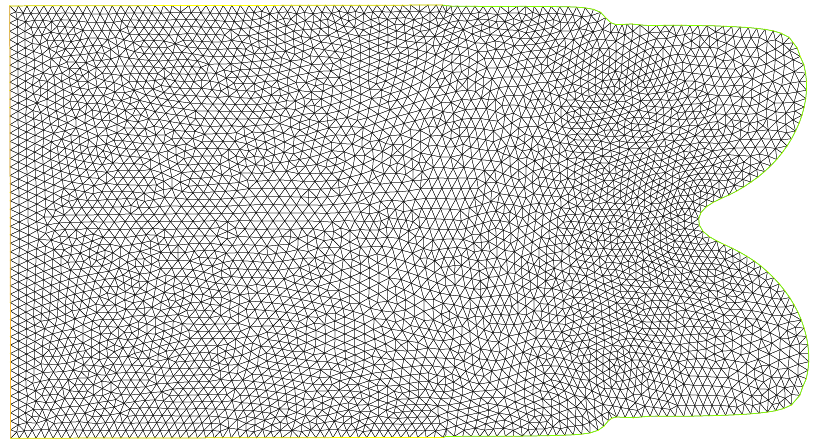}
	\end{minipage}
\\
     \begin{minipage}[b]{0.5\textwidth}
		\centering
	\includegraphics[width=3.0 in]{./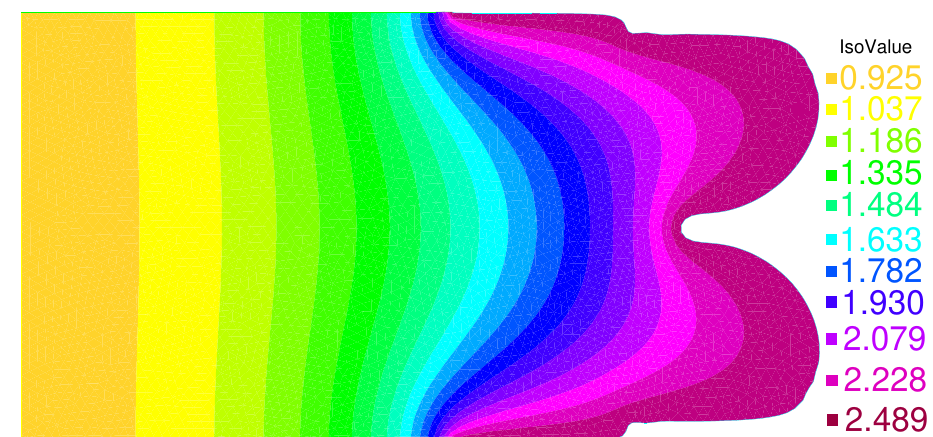}
	\end{minipage}
     \begin{minipage}[b]{0.5\textwidth}
		\centering
		\includegraphics[width=3.0 in]{./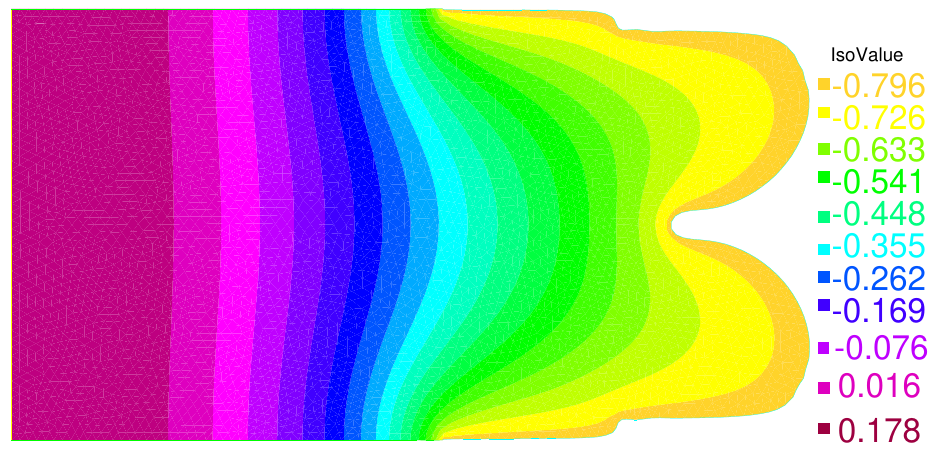}
	\end{minipage}
 \\
      \begin{minipage}[b]{0.5\textwidth}
		\centering
		\includegraphics[width=3. in]{./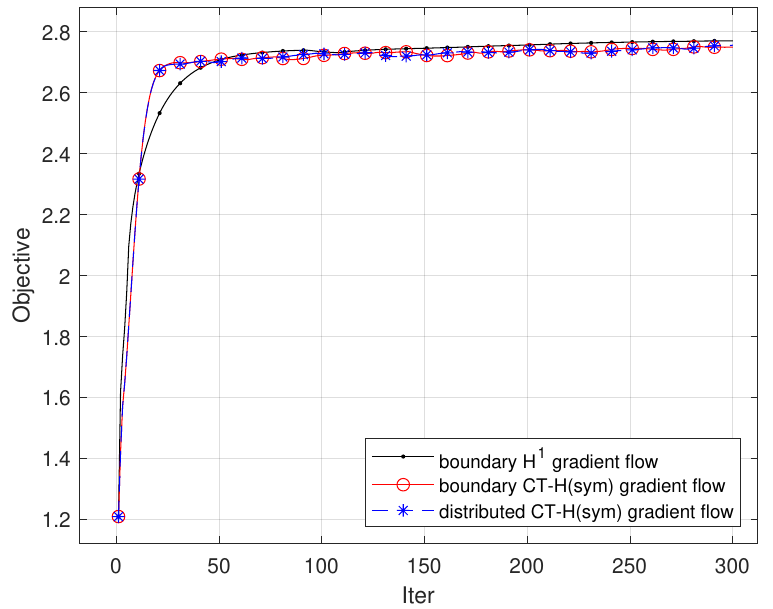}
	\end{minipage}
      \begin{minipage}[b]{0.5\textwidth}
		\centering
		\includegraphics[width=3. in]{./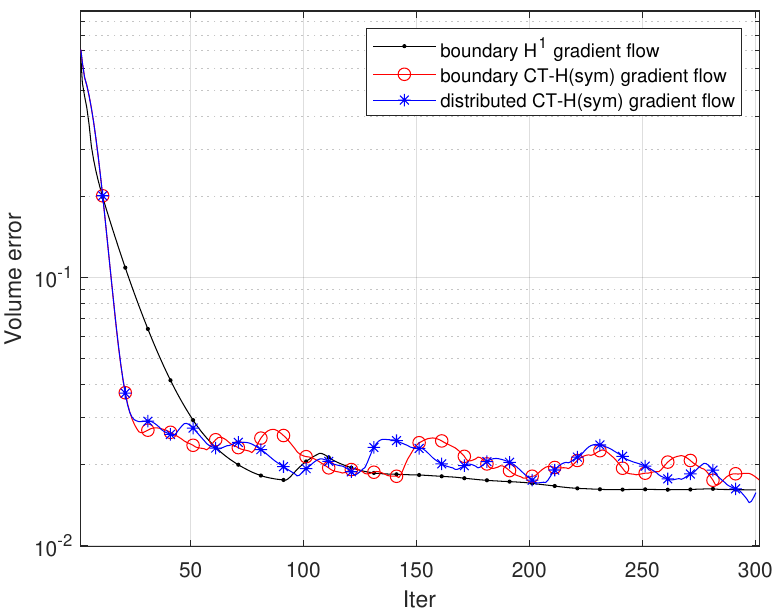}
	\end{minipage}
	\caption{Optimal mesh computed by CT-H(sym) with boundary type of shape gradient (upper left) and distributed shape gradient (upper right), counterion concentration $c_1$ (middle left) and electric potential $\phi$ (middle right) computed with the boundary type of shape gradient and CT-H(sym) gradient flow for Case 1.2, and convergence histories of objective (lower left) and volume error (lower right) for all Cases 1.1 - 1.3.}
	\label{Case1CRBou} 
\end{figure}

\subsection{Experiment 2 (Irregular domain)} In this experiment, we choose the initial domain as an irregular shape (see Fig. \ref{domains2d} middle). The objective is defined on a subdomain by $\mathcal{J}(\Omega_2,\bm c(\Omega))=\int_{\Omega_2} \sum_{i=1}^N j(c_i)\dx$. The shape sensitivity analysis is the same except that the right-hand side term in the adjoint system (\ref{adjPro}) is replaced by $-\chi_{\Omega_2} j^\prime(c_i)$, where $\chi_{\Omega_2}$ is the characteristic function of $\Omega_2$. The initial mesh (see Fig. \ref{Case2CRBouBigV0} upper left) has $6584$ triangular elements and $3518$ nodes.

In this case, we consider binary electrolytes with ionic valences $z_1=1$ and $z_2=-1$. Set the boundary condition for ionic concentrations on $\Gamma_{\rm in}$: $c_1^\infty=0.5$ and $c_2^\infty = 0.5$, and the boundary condition for electric potential on $\Gamma_2$ with $g=-1.0$. Take the simulation parameters $\epsilon=1.0, \gamma=0.2$, $\alpha = 2.5$, $\beta=5$ and $\mathcal{C}_1 = 1.6$. Numerical simulations start { from} an initial design as shown in Fig. \ref{Case2CRBouBigV0} (upper left). The convergence history of the objective presented in Fig. \ref{Case2CRBouBigV0} (upper right) { shows} that the total charge storage for the optimized morphology is almost three times compared with the initial one. Fig. \ref{Case2CRBouBigV0} (lower left) shows that cations, as counterions to the negatively { charged} electrode, are attracted into the finger-shaped optimized domain $\Omega_2$, while anions, as coions to the electrode, are repelled from the region due to electrostatic interactions. As depicted in Fig. \ref{Case2CRBouBigV0} (lower right), the electric potential is much screened by the counterions and level off in the bulk region.

\begin{figure}[htbp]
	\begin{minipage}{0.5\textwidth}
		\centering
		\includegraphics[width=2.8 in, height=2.2 in]{./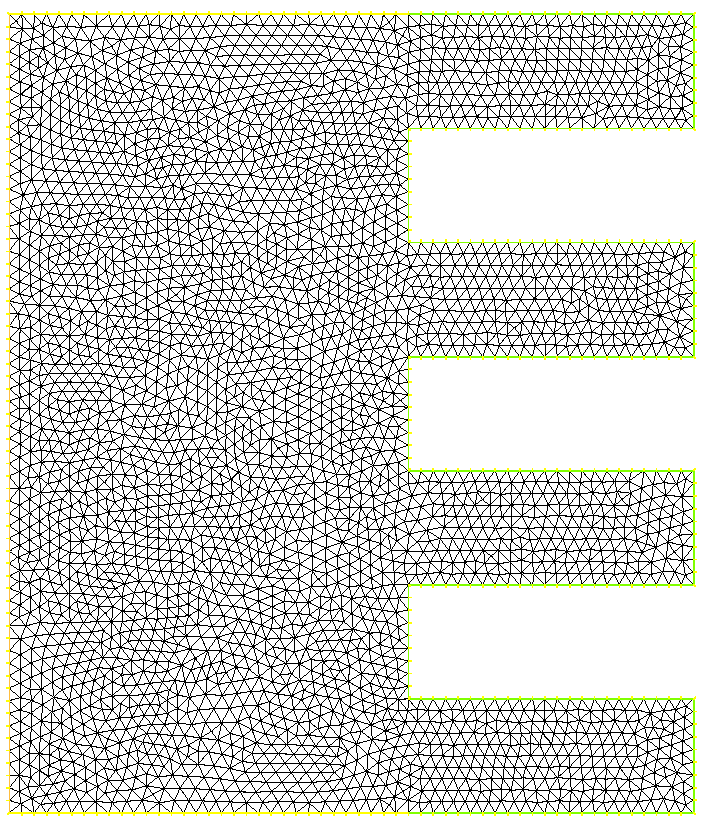}
	\end{minipage}
     \begin{minipage}{0.5\textwidth}
		\centering
		\includegraphics[width=2.9 in]{./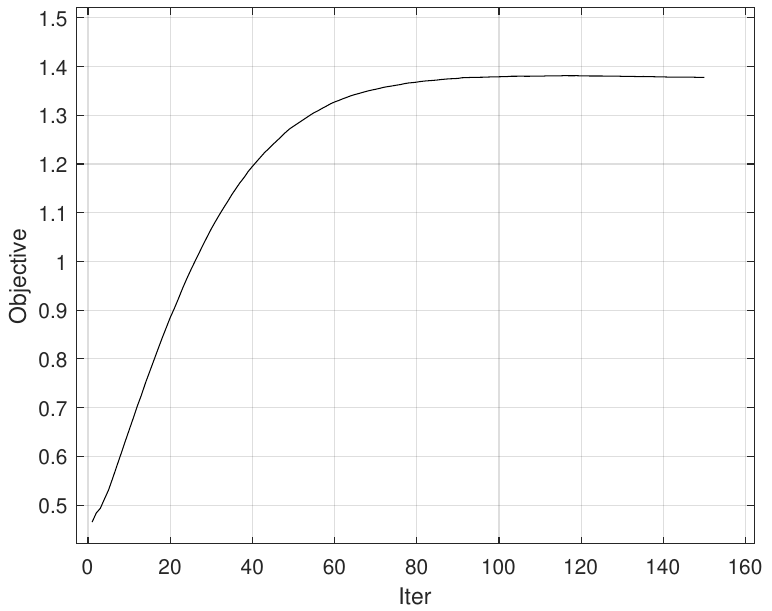}
	\end{minipage}
 \\
     \begin{minipage}{0.5\textwidth}
		\centering
		\includegraphics[width=2.8 in]{./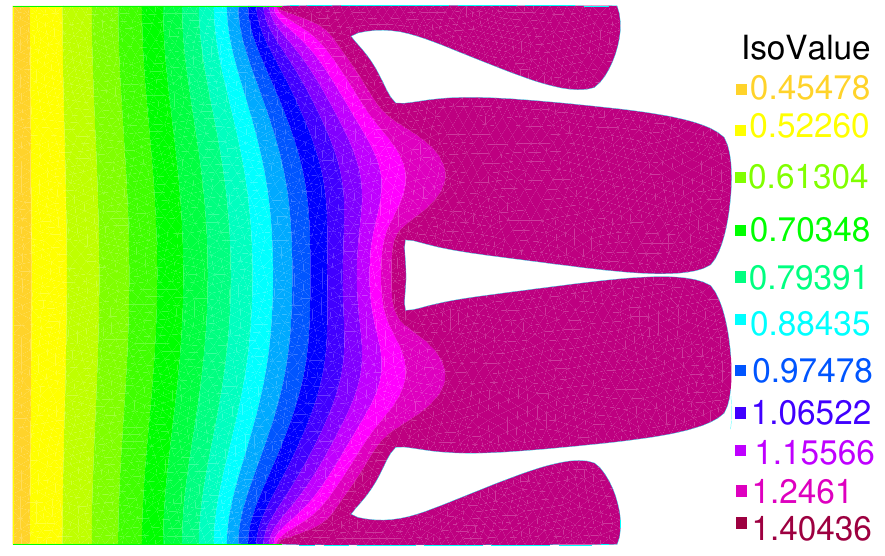}
	\end{minipage}
     \begin{minipage}{0.5\textwidth}
		\centering
		\includegraphics[width=2.8 in]{./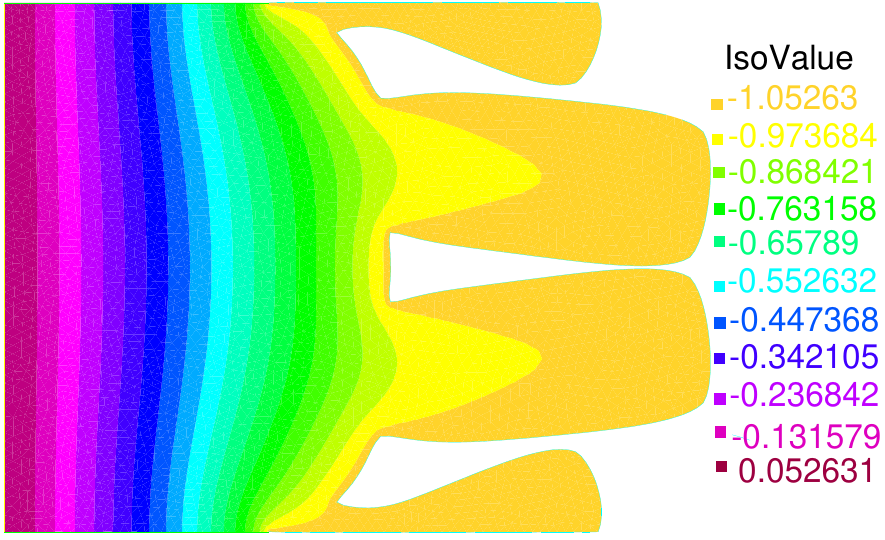}
	\end{minipage}
	\caption{Initial mesh(upper left),  convergence histories of the objective (upper right), counterion concentration $c_1$ (lower left), and electric potential $\phi$ (lower right) at the optimized shape computed by CT-H(sym) gradient flow for Experiment 2.  
    }
	\label{Case2CRBouBigV0} 
\end{figure}

\subsection{Experiment 3 (Porous domain)}
Consider a porous initial domain with six irregular shapes inside; cf. right plot of Fig. \ref{domains2d}. Binary electrolytes with ionic valences $z_1=1$ and $z_2=-1$ are taken into account. The initial mesh (see Fig. \ref{Case32} upper left) has $13842$ triangular elements and $7294$ nodes.
The target volume is set as 1.25 times the initial volume. Take the following simulation parameters: $c_1^{\infty}=c_1^{\infty}=0.5, g=-1, \epsilon=10^{-2}, \gamma=10^{-3}$, $\alpha = 2.5$ and $\beta=10$. Notice that the boundary of the holes is a part of $\Gamma_2$ that is to be optimized. The convergence history of the objective is presented in the upper right plot of Fig. \ref{Case32}, showing the effectiveness of the optimization algorithm \ref{alg1} in maximizing charge storage. The lower left plot of Fig. \ref{Case32} presents the optimized morphology of porous configuration and the corresponding distribution of counterion concentration $c_1$. The computed volume error is less than 0.01.  Clearly, it can be seen that the charged surface has large variations in the upper and lower boundaries. The counterions are accumulated in thin electric double layers, { since} the simulations take a relatively small $\epsilon$ that gives rise to sharp boundary layers. Such results are consistent with known conclusions that charge storage can be enhanced by increasing charged surface area in electrodes.  The electric potential corresponding to the optimized morphology is presented in the lower right plot of Fig. \ref{Case32}, which demonstrates that the potential gets screened drastically in sharp electric double layers. 

\begin{figure}[htbp]
	\begin{minipage}{0.5\textwidth}
		\centering
		\includegraphics[width=2.9 in, height=2.2 in]{./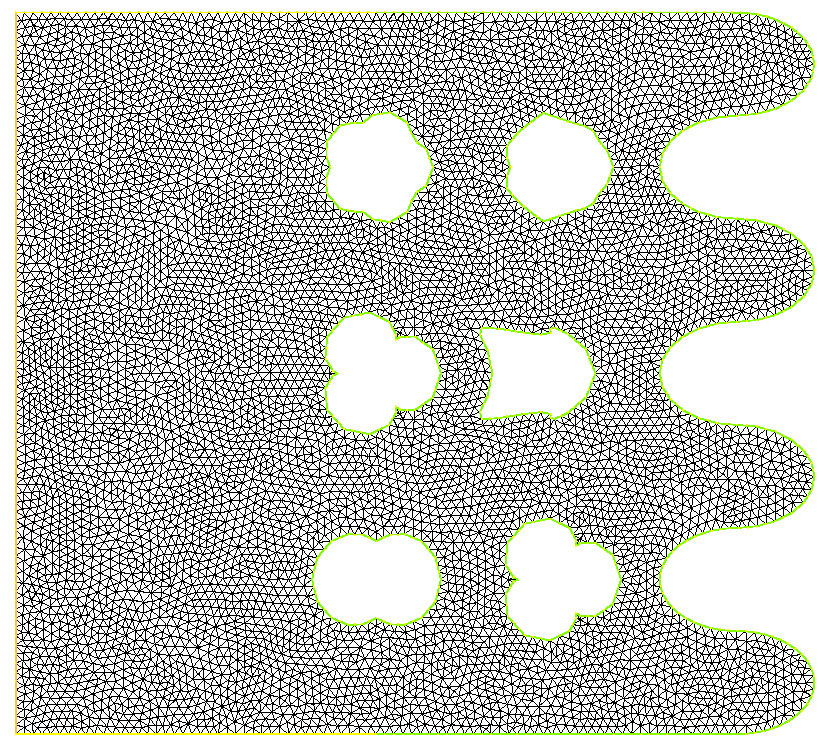}
	\end{minipage}
    \begin{minipage}{0.5\textwidth}
		\centering
		\includegraphics[width=2.9 in]{./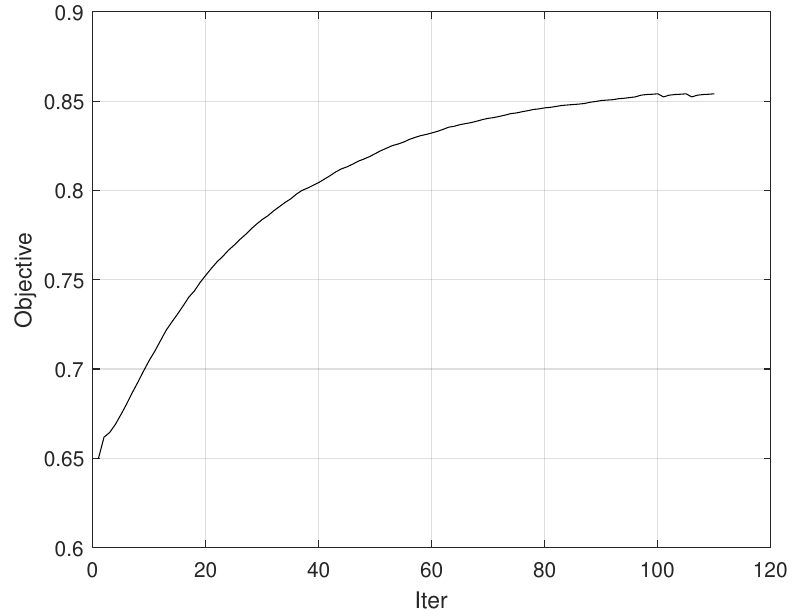}
	\end{minipage}
    \\
    \begin{minipage}{0.5\textwidth}
		\centering		\includegraphics[width=2.8 in]{./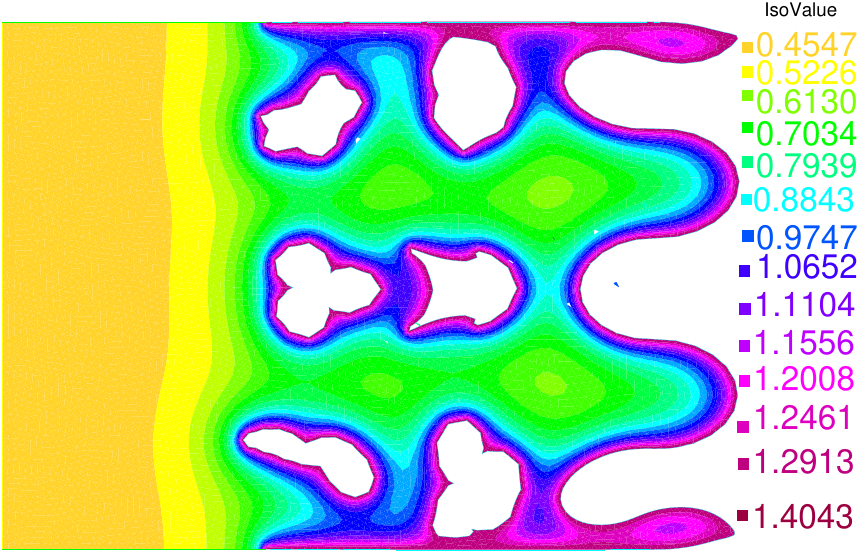}
	\end{minipage}
    \begin{minipage}{0.5\textwidth}
		\centering
		\includegraphics[width=2.8 in]{./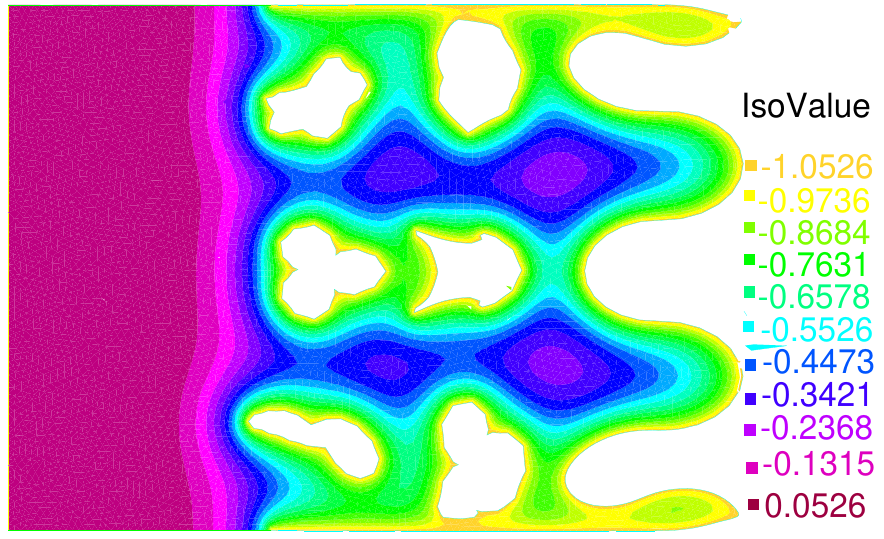}
	\end{minipage}
    \caption{Initial mesh (upper left), convergence history of the objective (upper right),  counterion concentration $c_1$ at the optimal morphology computed by CT-H(sym) gradient flow (upper right),  and potential $\phi$ of optimized morphology computed by CT-H(sym) gradient flow (lower right) for Experiment 3.}
	\label{Case32} 
\end{figure}

\begin{figure}[htbp]
\centering
\begin{tikzpicture}[scale=4.]
\draw[thick] (0.0,1.4) -- (0.0,0.0) -- (1.2,0.0) -- (1.2,0.2) -- (0.8,0.2) -- (0.8,0.4) -- (1.2,0.4) -- (1.2,0.6) -- (0.8,0.6) -- (0.8,0.8) -- (1.2,0.8) -- (1.2,1.0) -- (0.8,1.0)--(0.8,1.2) -- (1.2,1.2) -- (1.2,1.4)-- cycle;
\draw[dashed] (0.0+0.1,1.4+0.1) -- (0.0+0.1,0.0+0.1) -- (1.2+0.1,0.0+0.1) -- (1.2+0.1,0.2+0.1) -- (0.8+0.1,0.2+0.1) -- (0.8+0.1,0.4+0.1) -- (1.2+0.1,0.4+0.1) -- (1.2+0.1,0.6+0.1) -- (0.8+0.1,0.6+0.1) -- (0.8+0.1,0.8+0.1) -- (1.2+0.1,0.8+0.1) -- (1.2+0.1,1.0+0.1) -- (0.8+0.1,1.0+0.1)--(0.8+0.1,1.2+0.1) -- (1.2+0.1,1.2+0.1) -- (1.2+0.1,1.4+0.1)-- cycle;
\draw[thick] (0.0,1.4) -- (0.1,1.5);
\draw[dashed] (0.0,0.0) -- (0.1,0.1);
\draw[thick] (1.2,0.0) -- (1.3,0.1);
\draw[thick] (1.2,0.2) -- (1.3,0.3);
\draw[thick] (0.8,0.2) -- (0.9,0.3);
\draw[dashed] (0.8,0.4) -- (0.9,0.5);
\draw[thick] (1.2,0.4) -- (1.3,0.5);
\draw[thick] (1.2,0.6) -- (1.3,0.7);
\draw[thick] (0.8,0.6) -- (0.9,0.7);
\draw[dashed] (0.8,0.8) -- (0.9,0.9);
\draw[thick] (1.2,0.8) -- (1.3,0.9);
\draw[thick] (1.2,1.0) -- (1.3,1.1);
\draw[thick] (0.8,1.0) -- (0.9,1.1);
\draw[dashed] (0.8,1.2) -- (0.9,1.3);
\draw[thick] (1.2,1.2) -- (1.3,1.3);
\draw[thick] (1.2,1.4) -- (1.3,1.5);
\draw[thick] (1.3,1.5) -- (0.1,1.5);
\draw[thick] (1.3,0.1) -- (1.3,0.3);
\draw[thick] (1.3,0.5) -- (1.3,0.7);
\draw[thick] (1.3,0.9) -- (1.3,1.1);
\draw[thick] (1.3,1.3) -- (1.3,1.5);
\draw[thick] (0.9,0.3) -- (1.3,0.3);
\draw[thick] (0.9,0.7) -- (1.3,0.7);
\draw[thick] (0.9,1.1) -- (1.3,1.1);
\draw[thick] (0.9,0.3) -- (0.9,0.4);
\draw[thick] (0.9,0.7) -- (0.9,0.8);
\draw[thick] (0.9,1.1) -- (0.9,1.2);
\draw [arrows = {-Latex[width=5pt, length=5pt]}] (-0.05,0.0) -- (-0.05,1.4);
\draw [arrows = {-Latex[width=5pt, length=5pt]}] (-0.05,1.4) -- (-0.05,0.0);
\draw [arrows = {-Latex[width=5pt, length=5pt]}] (0.0,-0.05) -- (1.2,-0.05);
\draw [arrows = {-Latex[width=5pt, length=5pt]}] (1.2,-0.05) -- (0.0,-0.05);
\draw (-0.15,0.7) node[scale=1.0] {$1.4L$};
\draw (0.6,-0.15) node[scale=1.0] {$1.2L$};
\draw (0.08,0.7) node[scale=1.0] {$\Gamma_{\rm in}$};
\draw (1.36,0.75) node[scale=1.0] {$\Gamma_{2}$};
\draw (0.6,0.05) node[scale=1.0] {$\Gamma_{1}$};
\draw (0.6,1.45) node[scale=1.0] {$\Gamma_{1}$};
\draw (0.6,0.7) node[scale=1.0] {$\Omega$};
\end{tikzpicture}
\caption{Illustration for computational region in 3d.}
\label{Case4Geo} 
\end{figure}


\subsection{Experiment 4 (Irregular domain in 3d)}
In this experiment, we consider shape optimization for charge storage in 3d supercapacitors.
Note that CT-H(sym) gradient flow (\ref{CRflowDis}) is not available in { 3D}. For the sake of efficiency, we utilize the scalar $H^1$ gradient flow (\ref{ScalarFlow}) to obtain the descent direction. 

%

Consider binary electrolytes with monovalent ions. The initial domain has three ``cylinder holes" with four finger-shaped geometry; cf. the upper left plot of Fig. \ref{Exp4Case2}. The initial mesh has $69446$ nodes and $34176$ { tetrahedral} elements. Set the target volume as 1.4 times the initial volume. Simulations take $\epsilon=10^{-2}$, $\gamma=2.5\times 10^{-3}$, and $\beta=5$. The boundary data are given by $g=-1$, $c_1^\infty=0.5$, and $c_2^\infty=0.5$.  Fig. \ref{Exp4Case2} presents the optimal design of the electrode geometry starting from the initial domain, as well as the corresponding electric potential and distribution of cation concentration $c_1$ and anion concentration $c_2$. It can be seen that the figure-shaped domain enlarges as that in the 2d case, and the cylinder holes become narrower and smoother. The counterions, as expected, mainly distribute around the charge electrode surface, and the coions are repelled away from the electrode due to electrostatics.  The convergence histories for both the objective and volume error are plotted in Fig. \ref{Case4ObjVol}, which shows that the proposed optimization model with the algorithm can effectively increase charge storage of supercapacitors through shape optimization of the electrode.

\begin{figure}[htbp]

 	\begin{minipage}[b]{0.5\textwidth}
		\centering
		\includegraphics[width=2.8 in]{./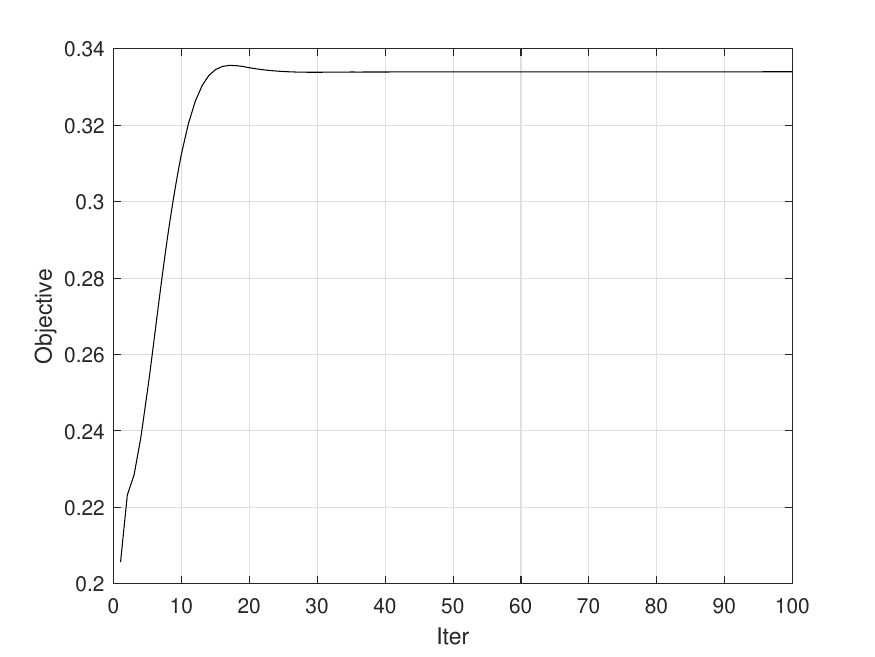}
	\end{minipage}
	\begin{minipage}[b]{0.25\textwidth}
		\centering
		\includegraphics[width=2.5 in]{./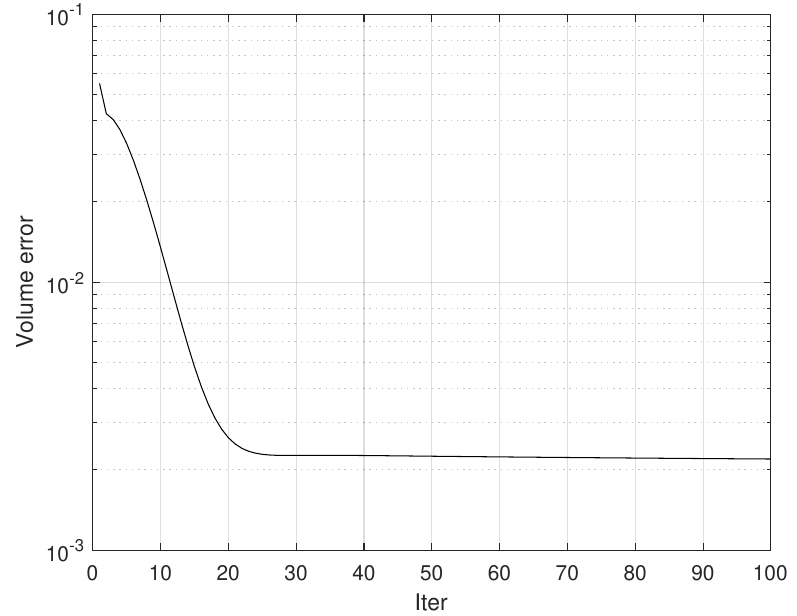}
	\end{minipage}
	\caption{The convergence histories of objective (left) and volume error (right) for Experiment 4.}
	\label{Case4ObjVol} 
\end{figure}
\begin{figure}[htbp]
	\begin{minipage}[b]{0.5\textwidth}
		\centering
		\includegraphics[width=1.5 in]{./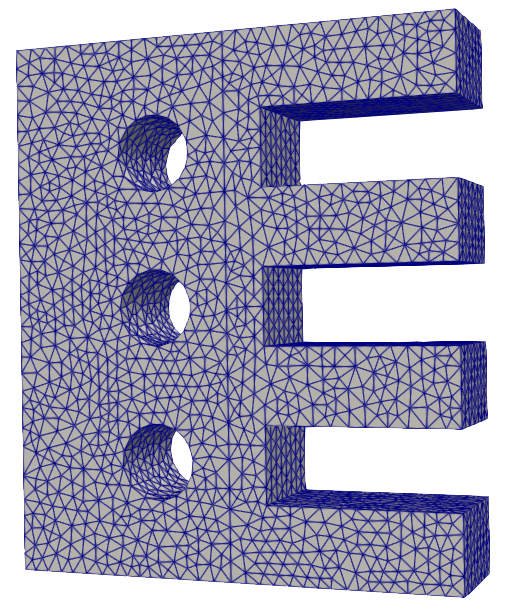}
	\end{minipage}
	\begin{minipage}[b]{0.5\textwidth}
		\centering
		\includegraphics[width=1.7 in]{./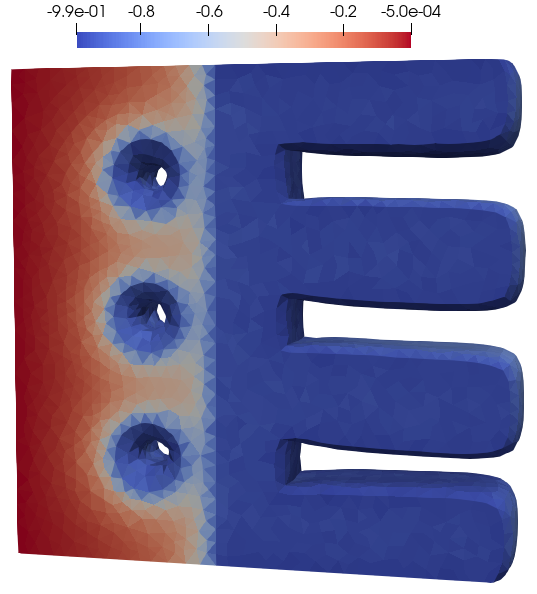}
	\end{minipage}
 \\
 	\begin{minipage}[b]{0.5\textwidth}
		\centering
		\includegraphics[width=1.7 in]{./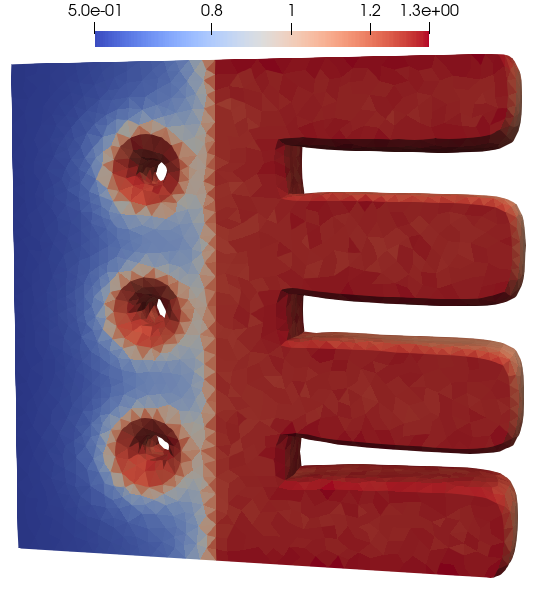}
	\end{minipage}
	\begin{minipage}[b]{0.5\textwidth}
		\centering
		\includegraphics[width=1.7 in]{./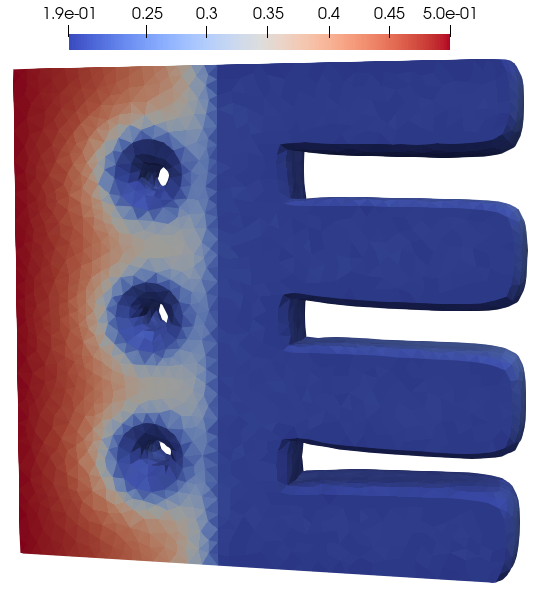}
	\end{minipage}
	\caption{Initial domain with mesh (upper left), electric potential $\phi$ (upper right), cation concentration $c_1$ (lower left), and anion concentration $c_2$ (lower right) correspond to the optimized morphology computed by $H^1$ gradient flow for Experiment 4.}
	\label{Exp4Case2} 
\end{figure}

\clearpage
\section{Conclusions}
In this work, a shape optimization model has been proposed to maximize charge storage in supercapacitors. Shape sensitivity analysis has been performed to derive Eulerian derivatives in both domain and boundary expressions. The Gummel fixed-point scheme with finite-element discretization has been developed to solve the governing PNP system that could have large convection in sharp electric double layers. Gradient flow algorithms have been used as well to find an optimal electrode morphology. Extensive numerical experiments have been conducted to demonstrate the effectiveness of the proposed optimization model and the companion numerical methods. With further refinement in electrochemical description, the optimization model and numerical methods proposed in this work provide a promising tool in the design of electrode morphology to enhance charge storage. { Future research in shape optimization of supercapacitors may include the enhancement of ion diffusion/charge transfer through geometry optimization of electrolyte-electrode interface.
}

From numerical results, one can find that topological changes are not allowed in current development of shape optimization approach. { The limitation of the current method is that shape changes need to be represented through grid movement. Hence, the number of holes and topological property can not change during shape evolution.} Empirically, electrode-electrolyte interface area that is closely related to the total charge storage can be further increased using topology optimization approach. This shall be systematically studied in our future work.


\end{document}